\newcommand*{\p}{\ensuremath\mathbb{P}} 
\newcommand*{\E}{\ensuremath\mathbb{E}} 
\newcommand{\var}{\mathrm{Var}}
\theoremstyle{plain}
\newtheorem{tm}{Theorem}
\newtheorem{prop}[tm]{Proposition}
\newtheorem{lm}[tm]{Lemma}
\theoremstyle{definition}
\newtheorem{rem}[tm]{Remark}
\begin{document}

\title{Large deviations at the transition for sums of Weibull-like random variables}

\author[1]{Fabien Brosset}
\author[2]{Thierry Klein}
\author[3]{Agn\`es Lagnoux}
\author[1]{Pierre Petit}

\affil[1]{Institut de Math\'ematiques de Toulouse; UMR5219. Universit\'e de Toulouse; CNRS. UT3, F-31062 Toulouse, France.}
\affil[2]{Institut de Math\'ematiques de Toulouse; UMR5219. Universit\'e de Toulouse; ENAC - Ecole Nationale de l'Aviation Civile , Universit\'e de Toulouse, France.}
\affil[3]{Institut de Math\'ematiques de Toulouse; UMR5219. Universit\'e de Toulouse; CNRS. UT2J, F-31058 Toulouse, France.}

\maketitle

\begin{abstract}
Deviation probabilities of the sum $S_n=X_1+\dots+X_n$ of independent and identically distributed real-valued random variables have been extensively investigated, in particular when $X_1$ is Weibull-like distributed, i.e.\ $\log \p(X\geqslant x) \sim -qx^{1-\epsilon}$ as $x\to \infty$. For instance, A.V.\ Nagaev formulated exact asymptotic results for $\p(S_n>x_n)$ when $x_n > n^{1/2}$ (\emph{see}, \cite{Nagaev69-1,Nagaev69-2}). In this paper, we derive rough asymptotic results (at logarithmic scale) with shorter proofs relying on classical tools of large deviation theory and giving an explicit formula for the rate function at the transition $x_n = \Theta(n^{1/(1+\epsilon)})$.
\end{abstract}

\textbf{Key words}: large deviations, sums of independent and identically distributed random variables, Weibull-like, semiexponential, stretched exponential, Gärtner-Ellis theorem, contraction principle.

\textbf{AMS subject classification}: 60F10, 60G50.


\section{Introduction} \label{sec:intro}

Moderate and large deviations of the sum of independent and identically distributed (i.i.d.) real-valued random variables have been investigated since the beginning of the 20th century. 
Kinchin \cite{Kinchin29} in 1929 was the first to give a result on large deviations of i.i.d. Bernoulli distributed random variables. In 1933, Smirnov  \cite{Smirnov33} improved this result and in 1938 Cramér \cite{Cramer38} gave a generalization to i.i.d.\ random variables satisfying the eponymous Cramér's condition which requires the Laplace transform of the common distribution of the random variables to be finite in a neighborhood of zero. Cramér's result was extended by Feller \cite{Feller43} to sequences of not necessarily identically distributed random variables under restrictive conditions
(Feller considered only random variables taking values in bounded intervals),
thus Cramér's result does not follow from Feller's result. A strengthening of Cramér's theorem was given by Petrov in \cite{Petrov54} together with a generalization to the case
of non-identically distributed random variables. Improvements of Petrov's result can be found in \cite{petrov2008large}.
Deviations for sums of heavy-tailed i.i.d.\ random variables were studied by several authors: an early result appears in \cite{Linnik61} and more recent references are \cite{borovkov2000large,borovkov2008asymptotic,denisov2008large,mikosch1998large}.

In \cite{Nagaev69-1,Nagaev69-2}, A.V.\ Nagaev  studied the case where the commom distribution of the i.i.d.\ random variables is absolutely continuous with respect to the Lebesgue measure with density  $p(t)\sim e^{-\abs{t}^{1-\epsilon}}$ as $\abs{t}$ tends to infinity, with $\epsilon \in (0,1)$. He distinguished five exact-asymptotics results corresponding to five types of deviation speeds. In \cite{Nagaev_1979_AnnProb}, S.V.\ Nagaev generalizes to the case where the tail writes as $e^{-t^{1-\epsilon}L(t)}$, where $\epsilon \in (0,1)$ and $L$ is a suitably slowly varying function at infinity. Such results can also be found in \cite{borovkov2000large,borovkov2008asymptotic}.

\medskip

Now, let us present the setting of this article. Let $\epsilon \in (0,1)$ and let $X$ be a real-valued random variable verifying: there exists $q>0$ such that
\begin{equation}
\label{eq:behav_X}
\log \p(X\geqslant x) \sim -qx^{1-\epsilon} \quad \text{as $x\to \infty$.}
\end{equation}
Such a random variable $X$ is often called a Weibull-like (or semiexponential, or stretched exponential) random variable.
One particular example is that of \cite{Nagaev69-1,Nagaev69-2} where $X$ has a density $p(x)\sim e^{-x^{1-\epsilon}}$. 
Moreover, unlike in \cite{Nagaev69-1,Nagaev69-2}, this unilateral assumption is motivated by the fact that we focus on upper deviations of the sum. Observe that \eqref{eq:behav_X} implies that the Laplace transform of $X$ is not defined on the right side of zero. Nevertheless, all moments of $X_+\defeq \max(X,0)$ are finite. A weaker assumption on the left tail is required:
\begin{equation}
\label{hyp1}
\exists \gamma>0 \quad \rho \defeq \E[|X|^{2+\gamma}] < \infty .
\end{equation}
We assume that $X$ is centered ($\E[X] = 0$) and denote by $\sigma$ the standard deviation of $X$ ($\var(X) = \sigma^2$). For all $n \in \N^*$, let $X_1$, $X_2$, ..., $X_n$ be i.i.d.\ copies of $X$. We set $S_n=X_1+\dots+X_n$. 
In this paper, we are interested in the asymptotic behavior of $\log \p(S_n\geqslant x_n)$ for any positive sequence $x_n \gg n^{1/2}$. Not only does the logarithmic scale allow us to use the modern theory of large deviations and provide simpler proofs than in \cite{borovkov2000large, borovkov2008asymptotic, Nagaev_1979_AnnProb, Nagaev69-1, Nagaev69-2}, but we also obtain more explicit results. According to the asymptotics of $x_n$, only three logarithmic asymptotic ranges appear. First, the Gaussian range: when $x_n \ll n^{1/(1+\epsilon)}$, $\log \p(S_n\geqslant x_n)\sim \log(1-\phi(\sigma^{-1} n^{-1/2} x_n))$, $\phi$ being the distribution function of the standard Gaussian law. Next, the domain of validity of the maximal jump principle: when $x_n \gg n^{1/(1+\epsilon)}$, $\log \p(S_n\geqslant x_n)\sim \log\p(\max(X_1,\ldots,X_n)\geqslant x_n)$. Finally, the transition ($x_n = \Theta(n^{1/(1+\epsilon)})$) appears to be an interpolation between the Gaussian range and the maximal jump one.

Logarithmic asymptotics were also considered in \cite{lehtomaa2017large} for a wider class of distributions than in the present paper. Nevertheless, the setting was restricted to the particular sequence $x_n=n$ (that lies in the maximal jump range). In \cite{eichelsbacher2003moderate}, the authors gave a necessary and sufficient condition on the logarithmic tails of the sum of i.i.d.\ real-valued random variables to satisfy a large deviation principle which covers the Gaussian range. In \cite{arcones2002large}, Arcones proceeded analogously and covered the maximal jump range for symmetric random variables. 
In \cite{gantert2000maximum}, the author studied a more general case of Weibull-like upper tails with a slowly varying function $L$, at a particular speed of the maximal jump range: $x_n=n^{1/(1-\epsilon)}$.

The transition at $x_n = \Theta(n^{1/(1+\epsilon)})$ is not considered in \cite{borovkov2000large,borovkov2008asymptotic}. It is treated in \cite{Nagaev_1979_AnnProb} and in \cite[Theorems 2 and 4]{Nagaev69-1,Nagaev69-2}. Nevertheless, the rate function is given through non explicit formulae and hence is difficult to interpret. The main contribution of this work is to provide an explicit formula for the rate function at the transition. Moreover, we provide probabilistic proofs which apply both to  the Gaussian range and to the transition. 

The paper is organized as follows. In Section \ref{sec:main}, we recall two known results (Theorems \ref{tm-gaussian} and \ref{tm-max}) and state the main theorem (Theorem \ref{tm-inter}). Section \ref{sec:preliminary} is devoted to preliminary results. In particular, we recall a unilateral version of Gärtner-Ellis theorem inspired from \cite{PS75} (Theorem \ref{th:plachky}) and establish a unilateral version of the contraction principle for a sum (Proposition \ref{principe-contraction-couple}), which has its own interest and which we did not find in the literature. The proof of Theorem \ref{tm-inter} can be found in Section \ref{sec:proof_13}. On the way, we prove Theorem \ref{tm-gaussian}. And, to be self-contained, we give in Section \ref{sec:proof_2} a short proof of Theorem \ref{tm-max} which is new, up to our knowledge.

\section{Main result} \label{sec:main}

In this section, we summarize all regimes of deviations for the sum $S_n$ defined in Section \ref{sec:intro}. The two following results are known (see, e.g.,\ \cite{eichelsbacher2003moderate} and \cite{borovkov2000large}).

\begin{tm}[Gaussian range] \label{tm-gaussian}
For $n^{1/2} \ll x_n \ll n^{1/(1+\epsilon)}$, we have: 
\[
\lim_{n \to \infty} \frac{n}{x_n^2} \log \p(S_n\geqslant x_n)= -\frac{1}{2\sigma^2} .
\]
\end{tm}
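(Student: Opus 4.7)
My plan is to apply the unilateral Gärtner-Ellis theorem (Theorem \ref{th:plachky}) to the normalized variables $Z_n := S_n/x_n$ at the speed $v_n := x_n^2/n$, which tends to $\infty$ since $x_n \gg n^{1/2}$. Showing that the scaled log-moment generating function converges to $\Lambda(\lambda) = \lambda^2\sigma^2/2$ for $\lambda > 0$ gives the Gaussian rate function $\Lambda^*(z) = z^2/(2\sigma^2)$, whose infimum on $\{z \geq 1\}$ equals $1/(2\sigma^2)$, matching the desired limit.

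For the upper bound, because $X$ has no exponential moments, I would truncate at a level $t_n \to \infty$ and use the splitting
\[
\p(S_n \geq x_n) \leq n\,\p(X > t_n) + \p\Bigl(\textstyle\sum_{i=1}^n X_i \indic_{X_i \leq t_n} \geq x_n\Bigr),
\]
choosing $t_n$ so that the Weibull tail $n\,\p(X > t_n) = \exp(\log n - q t_n^{1-\epsilon}(1+o(1)))$ is $o\bigl(\exp(-(1-\delta)v_n/(2\sigma^2))\bigr)$ for any fixed $\delta > 0$. For the truncated sum, Chernoff's bound together with a Taylor expansion of $\E[e^{\lambda (x_n/n)\tilde X}]$ with $\tilde X := X\indic_{X \leq t_n}$ yields
\[
\log \E\bigl[e^{\lambda(x_n/n)\tilde X}\bigr] = \lambda(x_n/n)\,\E[\tilde X] + \tfrac{\lambda^2}{2}(x_n/n)^2\sigma^2\,(1+o(1)),
\]
using (i) the negligibility of the mean correction $|\E[\tilde X]| = |\E[X\indic_{X > t_n}]|$, (ii) $\E[\tilde X^2] \to \sigma^2$ via $\E[|X|^{2+\gamma}] < \infty$, and (iii) vanishing of the higher-order tail in the expansion provided $\lambda x_n t_n/n \to 0$. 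Multiplying by $n/v_n$ delivers $\Lambda(\lambda) = \lambda^2 \sigma^2/2$, and Plachky-Steinebach yields $\limsup(1/v_n)\log \p(S_n \geq x_n) \leq -1/(2\sigma^2)$.

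For the matching lower bound, I would perform an exponential tilt of the truncated law with tilt parameter $\lambda_n := x_n/(n\sigma^2)$. Under the tilted measure, the truncated sum has mean $\sim x_n$ and variance $\sim n\sigma^2$, so by Chebyshev's inequality the event $\{x_n \leq S_n \leq (1+\delta)x_n\}$ has probability bounded below by a constant. The Cramér-Chernoff change-of-measure identity then gives $\p(S_n \geq x_n) \geq c_\delta \exp(-(1+\delta) v_n/(2\sigma^2))$ for arbitrary $\delta > 0$, matching the upper bound as $\delta \to 0$.

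The crux is the joint calibration of the truncation level $t_n$: it must be large enough that $n\p(X > t_n)$ is dominated by the target Gaussian rate (requiring $q t_n^{1-\epsilon} \gtrsim v_n$) and small enough that $\lambda x_n t_n/n \to 0$ so the MGF expansion remains quadratic at the optimum $\lambda \asymp 1/\sigma^2$. Reconciling these requirements with tight control of the constants (by inserting $(1\pm\delta)$-tolerances at each step and letting $\delta \to 0$) is what the hypothesis $x_n \ll n^{1/(1+\epsilon)}$ allows, and the same truncation plus unilateral contraction machinery (Proposition \ref{principe-contraction-couple}) underlies the treatment of Theorem \ref{tm-inter} at the transition scale.
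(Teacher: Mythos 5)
Your two calibration constraints on the truncation level $t_n$ are mutually incompatible on the upper part of the Gaussian range, and this is a genuine gap. Writing $x_n = n^\alpha$ with $1/2 < \alpha < 1/(1+\epsilon)$, the tail-negligibility requirement $q\,t_n^{1-\epsilon} \gtrsim v_n = x_n^2/n$ forces $t_n \gtrsim n^{(2\alpha-1)/(1-\epsilon)}$, while the requirement that $\lambda x_n t_n/n$ stay bounded (so that a second-order Taylor expansion with a $|s|^{2+\gamma}$ remainder controlled by $\E[|X|^{2+\gamma}]<\infty$ remains quadratic) forces $t_n \lesssim n^{1-\alpha}$. These two bounds are compatible if and only if $(2\alpha-1)/(1-\epsilon) \leqslant 1-\alpha$, i.e.\ $\alpha \leqslant (2-\epsilon)/(3-\epsilon)$. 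Since $(2-\epsilon)/(3-\epsilon) < 1/(1+\epsilon)$ for all $\epsilon\in(0,1)$ (the difference of the cross-products is $(1-\epsilon)^2>0$), no admissible $t_n$ exists on the band $n^{(2-\epsilon)/(3-\epsilon)} \lesssim x_n \ll n^{1/(1+\epsilon)}$: for any truncation compatible with the Chernoff expansion, the crude union bound $n\,\p(X>t_n)$ is \emph{not} $o(e^{-(1-\delta)v_n/(2\sigma^2)})$. The root of the problem is that you discard the constraint $\{S_n\geqslant x_n\}$ when you bound the event that some summand exceeds $t_n$; near the transition this constraint carries essentially all the exponential decay.

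What the paper does instead is fix the truncation exactly at $x_n^\epsilon$ (so the exponent $u x_n\cdot x_n^\epsilon/n \leqslant uC^{1+\epsilon}$ stays bounded all the way to the transition) and, rather than throwing away the event $\{\exists i:\ X_i\geqslant x_n^\epsilon\}$, it decomposes $\p(S_n\geqslant x_n)=\sum_{m=0}^n\binom{n}{m}\Pi_{n,m}(x_n)$ according to the \emph{number} of summands exceeding $x_n^\epsilon$, keeping the event $\{S_n\geqslant x_n\}$ inside each $\Pi_{n,m}$. The $m=0$ term is handled by the unilateral G\"artner--Ellis argument you describe (Lemma \ref{lem:eq-gauss-Pin0}); the $m=1$ term requires the unilateral sum-contraction principle (Proposition \ref{principe-contraction-couple}, Lemma \ref{lem:eq-gauss-Pin1_inter}); and the block $\sum_{m\geqslant 2}\binom{n}{m}\Pi_{n,m}$ is controlled by the uniform exponential bounds of Lemmas \ref{lem:xi_small} and \ref{lem:xi_medium} combined in Lemma \ref{lem:maj_pinm}. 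Your lower bound via tilting the truncated law is sound (the two constraints on $t_n$ for the lower bound are merely $n\,\p(X>t_n)\to 0$ and $x_n t_n/n$ bounded, which coexist throughout the Gaussian range, and in any case $\p(S_n\geqslant x_n)\geqslant\Pi_{n,0}(x_n)$ suffices), so the missing piece is precisely the finer upper-bound decomposition that you correctly identify as necessary at the transition but which in fact is already needed once $x_n$ passes $n^{(2-\epsilon)/(3-\epsilon)}$.
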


\begin{tm}[Maximal jump range] \label{tm-max}
For $x_n \gg n^{1/(1+\epsilon)}$, setting $M_n:=\max(X_1,\ldots,X_n)$,
\[
\lim_{n \to \infty} \frac{1}{x_n^{1-\epsilon}} \log \p(S_n\geqslant x_n)=\lim_{n \to \infty} \frac{1}{x_n^{1-\epsilon}} \log\p(M_n\geqslant x_n)=\lim_{n \to \infty} \frac{1}{x_n^{1-\epsilon}}\log \p(X \geqslant x_n) = -q .
\]
\end{tm}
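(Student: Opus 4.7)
I would prove the three equalities successively, starting from the rightmost and working leftwards.

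The rightmost equality is immediate from the Weibull-like tail assumption \eqref{eq:behav_X} and $x_n \to \infty$. For the middle equality involving $M_n$, the identity $\p(M_n < x_n) = (1-\p(X \geq x_n))^n$, combined with the union bound, yields
\[
\p(X \geq x_n) \leq \p(M_n \geq x_n) \leq n\,\p(X \geq x_n).
\]
After taking logarithms and dividing by $x_n^{1-\epsilon}$, the factor $\log n$ is negligible: the hypothesis $x_n \gg n^{1/(1+\epsilon)}$ forces $x_n^{1-\epsilon} \gg n^{(1-\epsilon)/(1+\epsilon)} \gg \log n$, and both bounds converge to $-q$.

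For the leftmost equality, the lower bound on $\p(S_n \geq x_n)$ comes from a one-big-jump construction: for any $\delta > 0$,
\[
\p(S_n \geq x_n) \geq \p\bigl(X_1 \geq (1+\delta) x_n\bigr) \cdot \p\bigl(X_2 + \cdots + X_n \geq -\delta x_n\bigr),
\]
and the second factor tends to $1$ by Chebyshev's inequality, since $\delta x_n \gg \sqrt{n}$ (as $x_n \gg n^{1/(1+\epsilon)} \gg n^{1/2}$) and $\E X = 0$. Taking logarithms, dividing by $x_n^{1-\epsilon}$, and letting $\delta \downarrow 0$ gives $\liminf \geq -q$.

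The matching upper bound is the principal obstacle. I would fix $\delta > 0$ small and decompose at the threshold $y_n = (1-\delta)x_n$:
\[
\p(S_n \geq x_n) \leq \p(M_n \geq y_n) + \p\bigl(\tilde S_n \geq x_n\bigr), \qquad \tilde X_i := X_i\,\indic_{\{X_i < y_n\}}.
\]
The first term is bounded by $n\,\p(X \geq y_n)$ and contributes $-q(1-\delta)^{1-\epsilon}$ at the logarithmic scale, exactly as in the analysis of $M_n$. For the second, ``no big jump'' term, I would apply a Fuk–Nagaev-type inequality to the centred truncated sum: a Chernoff estimate, with the exponential tilt $\lambda$ tuned against the truncated Laplace transform of $X$ (which is finite since $\tilde X \leq y_n$), combined with the moment assumption \eqref{hyp1}, yields a bound whose logarithm, divided by $x_n^{1-\epsilon}$, tends to $-\infty$ (or at least strictly below $-q(1-\delta)^{1-\epsilon}$). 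Sending $\delta \downarrow 0$ then closes the gap and gives $\limsup \leq -q$.

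The key technical difficulty lies in the truncated-sum estimate: standard Bernstein or Bennett inequalities are too weak here, since they produce only polynomial-in-$x_n$ decay once the truncation level $y_n$ is taken close to $x_n$. One must therefore exploit the full Fuk–Nagaev machinery (or an equivalent combinatorial argument treating multi-jump scenarios, relying on the strict subadditivity $k^{1-\epsilon} < k$ for $k \geq 2$) to extract the sub-exponential-in-$x_n^{1-\epsilon}$ decay needed to make the ``no big jump'' contribution negligible compared to the one-big-jump term.
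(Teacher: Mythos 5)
Your treatment of the first three assertions and of the lower bound $\varliminf x_n^{-(1-\epsilon)} \log \p(S_n \geqslant x_n) \geqslant -q$ is correct and matches the paper's decomposition $\p(S_n \geqslant x_n) = P_n + R_n$, with $R_n$ the one-big-jump term (the paper's Lemma~\ref{lem:limsupRn0} uses $\p(S_{n-1}\geqslant 0)\to 1/2$ by the CLT where you invoke Chebyshev, and truncates at $x_n$ rather than $(1-\delta)x_n$, but these are cosmetic). You also correctly identify that the only hard step is the no-big-jump term $P_n$, and correctly observe that Bernstein-type bounds are useless here. The problem is that your proposal then leaves exactly that step open: invoking Fuk--Nagaev as a black box is a deferral, not a proof, and two of the claims you make along the way are inaccurate. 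After a Chernoff bound at scale $\lambda = \Theta(x_n^{-\epsilon})$ (the only scale at which $\lambda x_n$ matches $x_n^{1-\epsilon}$ while $\lambda X$ stays bounded on $\{X<x_n^\epsilon\}$), the ratio $x_n^{-(1-\epsilon)}\log\p(\tilde S_n\geqslant x_n)$ converges to a \emph{finite} negative constant of order $-q(1-\delta)^{-\epsilon}$, not to $-\infty$. And the moment assumption \eqref{hyp1} cannot by itself control the tilted truncated expectation: its contribution from $\{x_n^\epsilon \leqslant X < y_n\}$ carries a factor $e^{\lambda y_n}=e^{\Theta(x_n^{1-\epsilon})}$ which no polynomial-moment tail bound can absorb, so the stretched-exponential tail \eqref{eq:behav_X} is indispensable precisely in this range.

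The paper's Lemma~\ref{lem:limsupPn0} supplies the computation your proposal needs, and it is elementary. Take $q'<q''<q$ and $\lambda = q'x_n^{-\epsilon}$, and split
\[
\E\bigl[e^{\lambda X}\indic_{X<x_n}\bigr]
 = \E\bigl[e^{\lambda X}\indic_{X<x_n^\epsilon}\bigr] + \E\bigl[e^{\lambda X}\indic_{x_n^\epsilon\leqslant X<x_n}\bigr] .
\]
On $\{X<x_n^\epsilon\}$ one has $\lambda X \leqslant q'$, so a second-order Taylor expansion together with $\E[X]=0$ and $\E[X^2]=\sigma^2$ gives $1+O(x_n^{-2\epsilon})$. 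On $\{x_n^\epsilon\leqslant X<x_n\}$, integrate by parts against the tail and use \eqref{eq:behav_X}; the resulting integrand $e^{q'x_n^{-\epsilon}y-q''y^{1-\epsilon}}$ has a convex exponent, hence is maximal at the endpoints of $\intervalleff{x_n^\epsilon}{x_n}$, and both endpoint values are $O(x_n^{-2\epsilon})$ for $n$ large. Hence $P_n \leqslant e^{-q'x_n^{1-\epsilon}}\bigl(1+O(x_n^{-2\epsilon})\bigr)^n$, and the second factor is negligible at the relevant scale because $nx_n^{-2\epsilon}/x_n^{1-\epsilon}=n/x_n^{1+\epsilon}\to 0$ exactly when $x_n\gg n^{1/(1+\epsilon)}$; letting $q'\to q$ gives $\varlimsup x_n^{-(1-\epsilon)}\log P_n\leqslant -q$. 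This is the concrete argument your ``key technical difficulty'' paragraph gestures at but does not produce.
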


The Gaussian range occurs when all summands contribute to the deviations of $S_n$ in the sense that $\log \p(S_n\geqslant x_n)\sim \log \p(S_n\geqslant x_n,\ \forall i\in \llbracket 1,n \rrbracket \quad X_i < x_n^{\epsilon})$. In the maximal jump range, the main contribution of the deviations of $S_n$ is due to one summand, meaning that $\log \p(S_n\geqslant x_n)\sim \log \p(X \geqslant x_n)$.

\medskip

Now we turn to the main contribution of this paper: we estimate the deviations of $S_n$ at the transition $x_n = \Theta(n^{1/(1+\epsilon)})$ and provide an explicit formula for the rate function. Notice that the sequence $n^{1/(1+\epsilon)}$ is the solution (up to a scalar factor) of the following equation in $x_n$: $x_n^2/n = x_n^{1-\epsilon}$, equalizing the speeds of of the deviation results obtained in the Gaussian range and in the maximal jump range. It appears that the behavior at the transition is a trade-off between the Gaussian range and the maximal jump range driven by the contraction principle for the distributions $\mathcal{L}(S_{n-1}\ |\ \forall i\ X_i<x_n^\epsilon) \otimes \mathcal{L}(X_n \ |\ X_n \geqslant x_n^\epsilon)$ and the function sum.

\begin{tm}[Transition] \label{tm-inter}
For all $C>0$ and $x_n=Cn^{1/(1+\epsilon)}$,
\[
\lim_{n \to \infty} \frac{n}{x_n^2} \log \p(S_n\geqslant x_n)
 = - \inf_{0 \leqslant t \leqslant 1} \left\{\frac{q (1-t)^{1-\epsilon}}{C^{1+\epsilon}} + \frac{t^2 }{2\sigma^2} \right\}
 \eqdef - J(C) .
\]
\end{tm}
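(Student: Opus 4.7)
The plan is to split each $X_i$ at the threshold $h_n = x_n^\epsilon$ into a truncated part and a residual, turning the problem into an interplay between a Gaussian-type moderate deviation for the truncated sum and a single Weibull jump for the residual. Write $\tilde X_i = X_i \indic_{X_i \leq h_n}$, $\check X_i = X_i \indic_{X_i > h_n}$, and set $\tilde S_n = \sum_i \tilde X_i$, $\check S_n = \sum_i \check X_i$, so that $S_n = \tilde S_n + \check S_n$. The natural speed is $s_n = x_n^2/n = C^2 n^{(1-\epsilon)/(1+\epsilon)}$; one checks $x_n^{1-\epsilon}/s_n = 1/C^{1+\epsilon}$, so the Gaussian rate $s_n/(2\sigma^2)$ and the single-jump rate $q s_n/C^{1+\epsilon}$ are of the same order. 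The truncation is pinned so that $h_n^{1-\epsilon}/s_n \to 0$ (no individual small jump is visible at speed $s_n$) while remaining small enough that a Gärtner--Ellis expansion delivers the Gaussian rate for the truncated sum.

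For the upper bound I would use the conditional Bernoulli decomposition
\[
\p(S_n \geq x_n) = \sum_{k=0}^n \binom{n}{k}\p(X > h_n)^k \p(X \leq h_n)^{n-k}\p(\hat S_{n-k} + \bar S_k \geq x_n),
\]
where $\hat S_m$ and $\bar S_k$ are sums of i.i.d.\ copies of $X$ conditional respectively on $X \leq h_n$ and on $X > h_n$. For $k = 0$, applying the unilateral Gärtner--Ellis theorem (Theorem~\ref{th:plachky}) to $\tilde S_n/x_n$ at speed $s_n$ yields the rate function $t \mapsto t^2/(2\sigma^2)$: the normalized log-MGF $(n^2/x_n^2)\log \E[\exp(\lambda x_n \tilde X/n)]$ converges to $\lambda^2 \sigma^2/2$, since $\E[\tilde X^2] \to \sigma^2$ and higher cumulants contribute at lower order in $x_n/n$. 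For $k = 1$, the term reduces, using $\p(X \leq h_n)^{n-1} \to 1$, to $n\int_{h_n}^\infty \p(\hat S_{n-1} \geq x_n - u)\,d\p_X(u)$; Laplace estimation with the change of variables $u = (1-t)x_n$ returns the infimum $\inf_{t \in [0,1]}\{q(1-t)^{1-\epsilon}/C^{1+\epsilon} + t^2/(2\sigma^2)\} = J(C)$. For $k \geq 2$, the subadditivity $(u_1 + \cdots + u_k)^{1-\epsilon} \leq u_1^{1-\epsilon} + \cdots + u_k^{1-\epsilon}$ ensures that distributing the big-jump mass across several jumps is at least as costly as concentrating it in one, so these terms are absorbed by the $k=1$ bound up to polynomial factors invisible at speed $s_n$.

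For the lower bound, pick $t^* \in [0,1]$ realizing the infimum defining $J(C)$. Independence of $X_n$ from $(X_1,\ldots,X_{n-1})$ and the inequality $S_{n-1} \geq \tilde S_{n-1}$ give
\[
\p(S_n \geq x_n) \geq \p(X \geq (1-t^*) x_n)\,\p(\tilde S_{n-1} \geq t^* x_n).
\]
By \eqref{eq:behav_X} the first factor is $\exp(-q(1-t^*)^{1-\epsilon}x_n^{1-\epsilon}(1+o(1)))$, while the second is $\exp(-(t^*)^2 x_n^2/(2(n-1)\sigma^2)(1+o(1)))$ by the lower-bound half of the Gärtner--Ellis estimate on the truncated sum (the boundary cases $t^* = 0$ or $t^* = 1$ handled by the CLT, respectively by $\p(X \geq 0)$ bounded below). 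At speed $s_n$ the product is $\exp(-J(C)s_n(1+o(1)))$, matching the upper bound. More abstractly, Proposition~\ref{principe-contraction-couple} packages both directions as a unilateral contraction principle applied to the pair $(\tilde S_n/x_n, \check S_n/x_n)$ with joint rate $(t,u) \mapsto t^2/(2\sigma^2) + qu^{1-\epsilon}/C^{1+\epsilon}\indic_{u \geq 0}$ and the sum function.

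The principal obstacle is the $k \geq 2$ piece of the upper bound: the probability $\p(\#\{i: X_i > h_n\} \geq 2)$ is only $\exp(o(s_n))$ and hence \emph{not} negligible at speed $s_n$ by a crude union bound, so the argument must genuinely exploit the subadditivity of $z \mapsto z^{1-\epsilon}$ to show that multi-jump configurations are dominated by the single-jump one. A secondary subtlety is checking that the Gärtner--Ellis expansion remains sharp at the borderline regime $x_n \sim n^{1/(1+\epsilon)}$, where the truncation level $h_n = x_n^\epsilon$ is precisely the value balancing $\E[\tilde X^2] \to \sigma^2$ against the higher cumulants of $\tilde X$.
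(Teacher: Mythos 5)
Your overall strategy matches the paper's: truncate at $h_n = x_n^\epsilon$, use a (unilateral) Gärtner--Ellis argument for the all-small-jump part, and treat the one-big-jump part by a Laplace/contraction argument; both are present in the paper via Lemma~\ref{lem:eq-gauss-Pin0}, Lemma~\ref{lem:eq-gauss-Pin1_inter} and Proposition~\ref{principe-contraction-couple}. You are also right to single out the $k\geqslant 2$ terms as the principal obstacle. However, your proposed resolution — subadditivity of $z\mapsto z^{1-\epsilon}$ — is not sufficient, and the claim that the remaining combinatorial factors are ``polynomial \dots invisible at speed $s_n$'' is incorrect.

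Here is the problem. Subadditivity gives, for $k$ big jumps of sizes $u_1,\dots,u_k\geqslant h_n$ summing to $u$, only
\[
u_1^{1-\epsilon}+\dots+u_k^{1-\epsilon}\ \geqslant\ (u_1+\dots+u_k)^{1-\epsilon}\ \geqslant\ u^{1-\epsilon} ,
\]
i.e.\ exactly the same exponential cost as a single jump of size $u$. This controls the integrand but not the \emph{volume} of configurations. Once you restore the $k$-fold integral and the choice of which indices are big, the $k$-th term carries a factor of order $\binom{n}{k}(x_n+2)^k$: for $k$ allowed to range up to $n$, this can be as large as $e^{\Theta(n\log n)}$, which dwarfs $e^{s_n}=e^{\Theta(n^{(1-\epsilon)/(1+\epsilon)})}$. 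So the multi-jump terms are \emph{not} absorbed by the $k=1$ bound by subadditivity alone. What the paper actually uses (Lemma~\ref{lem:xi_medium}) is a finer extremal analysis: since $(u_1,\dots,u_k)\mapsto -\sum u_i^{1-\epsilon}$ is convex, it is maximized on the simplex $\{u_i\geqslant h_n,\ \sum u_i = u\}$ at a vertex, where $k-1$ coordinates equal $h_n$; this yields
\[
\sum_i u_i^{1-\epsilon}\ \geqslant\ (k-1)h_n^{1-\epsilon}+\bigl(u-(k-1)h_n\bigr)^{1-\epsilon}\ \geqslant\ u^{1-\epsilon}+k\bigl(1-2^{-\epsilon}\bigr)h_n^{1-\epsilon}\qquad(k\geqslant 2),
\]
a strict improvement over subadditivity by an additive term \emph{linear in $k$}. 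The resulting extra factor $e^{-(1-\delta)k(1-2^{-\epsilon})x_n^{\epsilon(1-\epsilon)}}$ is exactly what is needed to beat $\binom{n}{k}$ and the volume factor, since $\sum_{k\geqslant 2}\binom{n}{k}e^{-ck\,x_n^{\epsilon(1-\epsilon)}}\leqslant\bigl(1+e^{-cx_n^{\epsilon(1-\epsilon)}}\bigr)^n$ has logarithm $\leqslant n\,e^{-cx_n^{\epsilon(1-\epsilon)}}=o(1)=o(s_n)$. Without the gain in $k$, the sum over $k\geqslant 2$ is not under control at speed $s_n$. So the missing idea is precisely this convexity-based vertex argument (Lemma~\ref{lem:xi_medium}), which then feeds into a discretization of $[0,1]$ and the principle of the largest term to give Lemma~\ref{lem:maj_pinm}. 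Apart from this gap, your lower bound and your treatment of the $k=0,1$ terms are sound and essentially identical to the paper's.
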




Let us give a somewhat more explicit expression for the rate function $J$. Let $f(t)=q(1-t)^{1-\epsilon}/C^{1+\epsilon} + t^2/(2\sigma^2)$. An easy computation shows that, if $C \leqslant C'_{\epsilon} \defeq (1+\epsilon)((1-\epsilon)q\sigma^2\epsilon^{-\epsilon})^{1/(1+\epsilon)}$, then $f$ is decreasing and its minimum $1/(2\sigma^2)$ is attained at $t=1$. If $C>C'_{\epsilon}$, then $f$ has two local minima, at $1$ and at $t(C)$: the latter corresponds to the smallest of the two roots in $[0,1]$ of $f'(t)=0$, equation equivalent to
\begin{align*} 
t(1-t)^{\epsilon}=\frac{(1-\epsilon)q\sigma^2}{C^{1+\epsilon}} .
\end{align*}
If $C'_\epsilon < C \leqslant C_\epsilon \defeq (1+\epsilon)(q\sigma^2(2\epsilon)^{-\epsilon})^{1/(1+\epsilon)}$, then $f(t(C)) \geqslant f(1)$. And, if $C > C_\epsilon$, then $f(t(C)) < f(1)$. As a consequence, for all $C> 0$,
\begin{align*} 
J(C) = \begin{cases}
\frac{1}{2\sigma^2} & \text{if  $C\leqslant C_{\epsilon}$,}\\ 
\frac{q(1-t(C))^{1-\epsilon}}{C^{1+\epsilon}}+\frac{t(C)^2}{2\sigma^2} & \text{if  $C>C_{\epsilon}$.}
\end{cases}
\end{align*}

As a consequence, we see that the transition interpolates between the Gaussian range and the maximal jump one. First, when $x_n=Cn^{1/(1+\epsilon)}$, the asymptotics of the Gaussian range coincide with the one of the transition for $C\leqslant C_{\epsilon}$.
Moreover, $t(C_{\epsilon})=(1-\epsilon)/(1+\epsilon)$ and one can check that $-1/(2\sigma^2) = -q(1-t(C_{\epsilon}))^{1-\epsilon}/C_{\epsilon}^{1+\epsilon} - t(C_{\epsilon})^2/(2\sigma^2)$. Finally, for $C>C_{\epsilon}$, by the definition of $t(C)$, we deduce that, as $C\to \infty$, $t(C)\to 0$ leading to $t(C){\sim}(1-\epsilon)q\sigma^2C^{-(1+\epsilon)}$. Consequently, $C^{1+\epsilon}J(C) \to 1$ as $C\to \infty$, and we recover the asymptotic of the maximal jump range (recall that, when $x_n=Cn^{1/(1+\epsilon)}$, $x_n^2/n=C^{1+\epsilon} x_n^{1-\epsilon}$).


\medskip

In Section \ref{sec:proof_13}, we give a proof of Theorem \ref{tm-inter} which also encompasses Theorem \ref{tm-gaussian}. Before turning to this proof, we establish several intermediate results useful in the sequel.

\section{Preliminary results}\label{sec:preliminary}

First, we present a classical result, known as the principle of the largest term, that will allow us to consider the maximum of several quantities rather than their sum. The proof is standard (see, e.g., \cite[Lemma 1.2.15]{DZ98}).

\begin{lm}[Principle of the largest term] \label{lm-magique}
Let $(v_n)_{n\geqslant 0}$ be a positive sequence diverging to $\infty$, $N$ be a positive integer, and, for $i=1,\ldots,N$, $(a_{n,i})_{n\geqslant 0}$ be a sequence of non-negative numbers. Then, 
\[
\varlimsup_{n \to \infty} \frac{1}{v_n} \log \left(\sum_{i=1}^N a_{n,i}\right)=\max_{i=1,\ldots,N}\left( \varlimsup_{n \to \infty} \frac{1}{v_n} \log a_{n,i}\right) .
\]
\end{lm}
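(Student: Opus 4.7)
The plan is to prove the two inequalities separately. For the upper bound, I start from the elementary estimate $\sum_{i=1}^{N} a_{n,i} \leqslant N \max_{i=1,\ldots,N} a_{n,i}$. Taking logarithms (with the convention $\log 0 = -\infty$) and dividing by $v_n$ yields
\[
\frac{1}{v_n} \log \sum_{i=1}^N a_{n,i} \leqslant \frac{\log N}{v_n} + \frac{1}{v_n} \log \max_{i=1,\ldots,N} a_{n,i} .
\]
Since $v_n \to \infty$, the first summand tends to $0$, so the upper bound reduces to the elementary exchange of $\varlimsup$ with a finite maximum: $\varlimsup_n \max_i b_{n,i} = \max_i \varlimsup_n b_{n,i}$ for any finite family $(b_{n,i})_n$ of extended real sequences (applied with $b_{n,i} = v_n^{-1} \log a_{n,i}$). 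This commutation can be established by a pigeonhole argument: along a subsequence realising the limsup on the left, some fixed index $i_0$ achieves the maximum infinitely often, whence $\varlimsup_n \max_i b_{n,i} \leqslant \varlimsup_n b_{n,i_0} \leqslant \max_i \varlimsup_n b_{n,i}$, and the reverse inequality is immediate from $\max_i b_{n,i} \geqslant b_{n,j}$ for each $j$.

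For the lower bound, fix $j \in \{1,\ldots,N\}$. Non-negativity of the $a_{n,i}$ yields $\sum_{i=1}^N a_{n,i} \geqslant a_{n,j}$, so
\[
\varlimsup_{n\to\infty} \frac{1}{v_n} \log \sum_{i=1}^N a_{n,i} \geqslant \varlimsup_{n\to\infty} \frac{1}{v_n} \log a_{n,j} .
\]
Taking the maximum over $j \in \{1,\ldots,N\}$ then gives the desired inequality.

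The argument is short and poses no real difficulty; the only point worth highlighting is that the commutation of $\varlimsup$ with maximum crucially uses the finiteness of $N$ (which is precisely the reason why this lemma is called the \emph{principle of the largest term}: a finite number of terms can be absorbed into a logarithmic correction $\log N / v_n$ which is negligible at this scale). The convention $\log 0 = -\infty$ ensures the statement is meaningful and correct under the stated non-negativity assumption alone, with no need to rule out vanishing $a_{n,i}$.
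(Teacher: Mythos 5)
Your proof is correct and is exactly the standard argument the paper defers to (Dembo--Zeitouni, Lemma 1.2.15): the sandwich $\max_i a_{n,i} \leqslant \sum_i a_{n,i} \leqslant N \max_i a_{n,i}$, the negligible $\log N / v_n$ correction, and the commutation of $\varlimsup$ with a finite maximum. Nothing to add.
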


The next theorem is a unilateral version of Gärtner-Ellis theorem, which was proved in \cite{PS75}. Its proof is omitted to lighten the present paper.

\begin{tm}[Unilateral Gärtner-Ellis theorem] \label{th:plachky}
Let $(Y_n)_{n\geqslant 0}$ be a sequence of real random variables and a positive sequence $(v_n)_{n\geqslant 0}$ diverging to $\infty$. Suppose that there exists a differentiable function $\Lambda$ defined on $\R_+$ such that $\Lambda'$ is a (increasing) bijective function from $\R_+$ to $\R_+$ and, for all $\lambda\geqslant 0$:
\[
\frac{1}{v_n} \log\E\bigl[ e^{v_n\lambda Y_n} \bigr] \xrightarrow[n\to\infty]{} \Lambda(\lambda) .
\]
Then, for all $c\geqslant 0$, 
\[
-\inf_{t>c} \Lambda^*(t) \leqslant \varliminf_{n\to\infty} \frac{1}{v_n}\log \p(Y_n>c) \leqslant \varlimsup_{n\to\infty} \frac{1}{v_n}\log \p(Y_n\geqslant c) \leqslant -\inf_{t\geqslant c} \Lambda^*(t),
\]
where, for all $t\geqslant 0$, $\Lambda^*(t)\defeq \sup \{ \lambda t -\Lambda(\lambda) \ ;\ \lambda \geqslant 0\}$.
\end{tm}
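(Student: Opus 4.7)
The plan is to prove matching upper and lower bounds for $\tfrac{n}{x_n^2}\log\p(S_n\geqslant x_n)$ at the common speed $v_n:=x_n^2/n=C^{1+\epsilon}x_n^{1-\epsilon}$. The heuristic is that a typical deviation splits into a ``bulk'' contribution of size $tx_n$ at Gaussian cost $t^2/(2\sigma^2)$ and a single ``big jump'' of size $(1-t)x_n$ at Weibull cost $q(1-t)^{1-\epsilon}/C^{1+\epsilon}$, where $t\in[0,1]$ is the optimization variable. Throughout I would fix a truncation level $\alpha_n\to\infty$ with $\alpha_n=o(n/x_n)$ and decompose $X_i=\tilde X_i+Z_i$, with $\tilde X_i:=X_i\indic_{\{X_i<\alpha_n\}}$ and $Z_i:=X_i\indic_{\{X_i\geqslant\alpha_n\}}$; set $\tilde S_n:=\sum_i\tilde X_i$ and $T_n:=\sum_i Z_i$, so that $S_n=\tilde S_n+T_n$ with $T_n\geqslant 0$.

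For the lower bound, fix $t\in[0,1]$ and isolate $X_n$ to get $\p(S_n\geqslant x_n)\geqslant\p(S_{n-1}\geqslant tx_n)\p(X_n\geqslant(1-t)x_n)$. The second factor contributes $-q(1-t)^{1-\epsilon}/C^{1+\epsilon}$ after $v_n$-normalization by \eqref{eq:behav_X}. For the first, since $T_{n-1}\geqslant 0$ implies $S_{n-1}\geqslant\tilde S_{n-1}$, it suffices to apply the unilateral Gärtner--Ellis theorem (Theorem~\ref{th:plachky}) to $\tilde S_{n-1}/x_n$: a Taylor expansion of $\log\E[e^{\theta_n\tilde X}]$ at $\theta_n:=\lambda x_n/n\to 0$ (where \eqref{hyp1} and the choice of $\alpha_n$ control the bias $\E[\tilde X]$ and the cubic remainder) produces the limit $\Lambda(\lambda)=\lambda^2\sigma^2/2$ with Legendre transform $\Lambda^*(s)=s^2/(2\sigma^2)$, hence $\tfrac{n}{x_n^2}\log\p(S_{n-1}\geqslant tx_n)\geqslant-t^2/(2\sigma^2)+o(1)$. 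Optimizing over $t\in[0,1]$ gives the lower bound $-J(C)$; the same argument with the optimum forced to $t=1$ by a subcritical sequence $x_n\ll n^{1/(1+\epsilon)}$ simultaneously recovers Theorem~\ref{tm-gaussian}.

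For the upper bound, I would treat $(\tilde S_n/x_n, T_n/x_n)$ jointly at speed $v_n$. The marginal rate of $\tilde S_n/x_n$ at level $s$ is $s^2/(2\sigma^2)$ (again by Theorem~\ref{th:plachky}, as above), while that of $T_n/x_n$ at level $u$ is $qu^{1-\epsilon}/C^{1+\epsilon}$, obtained from a union bound over indices combined with the observation that the strict subadditivity of $x\mapsto x^{1-\epsilon}$ makes multi-jump configurations strictly more expensive than a single jump (two equal jumps cost $2^{\epsilon}q u^{1-\epsilon}x_n^{1-\epsilon}$, strictly more than the single-jump budget). Conditioning on the random set $A:=\{i:X_i\geqslant\alpha_n\}$ renders $\tilde S_n$ and $T_n$ conditionally independent, and Lemma~\ref{lm-magique} reduces the analysis to the dominant case $|A|=1$; the unilateral contraction principle for a sum (Proposition~\ref{principe-contraction-couple}) then identifies the limit as
\[
\varlimsup_{n\to\infty}\frac{1}{v_n}\log\p(S_n\geqslant x_n)\leqslant -\inf_{\substack{s,u\geqslant 0\\ s+u\geqslant 1}}\Bigl\{\tfrac{s^2}{2\sigma^2}+\tfrac{qu^{1-\epsilon}}{C^{1+\epsilon}}\Bigr\}=-J(C)
\]
upon substituting $s=t$, $u=1-t$ along the active constraint.

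The main obstacle is the joint asymptotic for $(\tilde S_n,T_n)$: the truncation level $\alpha_n$ must be balanced so that simultaneously (i) $\theta_n\alpha_n\to 0$ keeps the Taylor expansion of the truncated log-Laplace transform valid, and (ii) $\alpha_n\to\infty$ fast enough that multi-jump configurations contribute at strictly higher speed than $v_n$. A range such as $\alpha_n=o(n^{\epsilon/(1+\epsilon)})$ with $\alpha_n\to\infty$ meets both requirements thanks to \eqref{hyp1}, and justifying the conditional independence together with matching the upper bound to the one-big-jump lower bound is the technical heart of the proof.
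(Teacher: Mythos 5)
Your proposal does not address the statement it is supposed to prove. The statement in question is the unilateral G\"artner--Ellis theorem (Theorem~\ref{th:plachky}): an abstract result about an arbitrary sequence $(Y_n)$ of real random variables whose normalized log-Laplace transforms converge on $\R_+$ to a differentiable $\Lambda$ with $\Lambda'$ a bijection of $\R_+$. What you have written is instead a proof sketch of Theorem~\ref{tm-inter} (the transition regime for $S_n$ with $x_n = Cn^{1/(1+\epsilon)}$), and in the course of it you invoke Theorem~\ref{th:plachky} twice as a known tool (once for $\tilde S_{n-1}/x_n$ in the lower bound, once for $\tilde S_n/x_n$ in the upper bound). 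As a proof of Theorem~\ref{th:plachky} this is both off-target and circular: none of the objects $S_n$, $x_n$, $\epsilon$, $C$, $J(C)$ occur in the statement, the hypotheses on $\Lambda$ and $\Lambda'$ are never used, and the conclusion is assumed rather than derived. (For what it is worth, the paper itself does not prove this theorem either; it cites Plachky and Steinebach and omits the proof.)

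To actually prove the statement you would need two separate arguments. The upper bound is the exponential Chebyshev inequality: for every $\lambda \geqslant 0$, $\p(Y_n \geqslant c) \leqslant \E[e^{v_n \lambda (Y_n - c)}]$, so $\varlimsup v_n^{-1} \log \p(Y_n \geqslant c) \leqslant -(\lambda c - \Lambda(\lambda))$, and optimizing over $\lambda \geqslant 0$ gives $-\Lambda^*(c)$, which equals $-\inf_{t \geqslant c}\Lambda^*(t)$ because $\Lambda^*$ is nondecreasing (the supremum defining it ranges only over $\lambda \geqslant 0$). The lower bound is the substantive part: for $t > c$ one performs an exponential change of measure with parameter $\lambda_t = (\Lambda')^{-1}(t)$, shows that under the tilted laws the variables $Y_n$ concentrate near $t > c$ (this is exactly where the differentiability of $\Lambda$ and the bijectivity of $\Lambda'$ are needed, and where the unilateral hypothesis --- convergence only for $\lambda \geqslant 0$ --- makes the argument delicate), and converts this back into $\varliminf v_n^{-1}\log\p(Y_n > c) \geqslant -(\lambda_t t - \Lambda(\lambda_t)) = -\Lambda^*(t)$, then takes the supremum over $t > c$. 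None of this machinery appears in your proposal.
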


Now, we present a unilateral version of the contraction principle for a sequence of random variables in $\R^2$ with independent coordinates where the function considered is the sum of the coordinates. Observe that only unilateral assumptions are required. The proof of the upper bound uses the same kind of decomposition as in the proof of 
\cite[Lemma 4.3]{dyszewski2020maximum}.

\begin{prop}[Unilateral sum-contraction principle] \label{principe-contraction-couple} Let  $((Y_{n,1} ,Y_{n,2}))_{n\geqslant 0}$ be a sequence of $\R^2$-valued random variables such that, for each $n$, $Y_{n,1}$ and $Y_{n,2}$ are independent. Let $(v_n)_{n\geqslant 0}$ be a positive sequence diverging to $\infty$. For all $a \in \R$ and $i \in \{ 1, 2 \}$, let us define
\[
\underline{I}_i(a) = - \inf_{u<a} \varliminf_{n\to \infty} \frac{1}{v_n} \log\p(Y_{n,i}>u)
\quad \text{and} \quad
\overline{I}_i(a) = - \inf_{u<a} \varlimsup_{n\to \infty} \frac{1}{v_n} \log\p(Y_{n,i}>u) .
\] 
Assume that:\newline
{\bfseries\upshape (H)} for all $M > 0$, there exists $d > 0$ such that
\[
\varlimsup_{n\to \infty} \frac{1}{v_n} \log\p(Y_{n,1}>d,\ Y_{n,2}<-d) < -M
\quad \text{and} \quad
\varlimsup_{n\to \infty} \frac{1}{v_n} \log\p(Y_{n,1}<-d,\ Y_{n,2}>d) < -M .
\]
Then, for all $c\in\R$, one has
\[
-\inf_{t> c} \underline{I}(t)
\leqslant
\varliminf_{n\to \infty} \frac{1}{v_n} \log\p(Y_{n,1}+Y_{n,2}> c) \leqslant 
\varlimsup_{n\to \infty} \frac{1}{v_n} \log\p(Y_{n,1}+Y_{n,2}\geqslant c) \leqslant -\inf_{t\geqslant c} \overline{I}(t)
\]

where, for all $t \in \R$,
\[
\underline{I}(t)\defeq\inf_{\substack{ a,b \in \R \\ a+b=t}} \underline{I}_1(a)+\underline{I}_2(b)
\quad \text{and} \quad
\overline{I}(t)\defeq\inf_{\substack{ a,b \in \R \\ a+b=t}} \overline{I}_1(a)+\overline{I}_2(b) .
\]
Moreover $\underline{I}$ and $\overline{I}$ are nondecreasing functions.
\end{prop}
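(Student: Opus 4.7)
The plan is to prove the three components in turn: the monotonicity of $\underline{I}$ and $\overline{I}$, the lower bound (an easy consequence of independence), and the upper bound, which is the main obstacle and where assumption \textbf{(H)} is crucial.

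For monotonicity, the nonincreasing map $u \mapsto \p(Y_{n,i} > u)$ makes both $\underline{I}_i$ and $\overline{I}_i$ nondecreasing, and this transfers to $\underline{I},\overline{I}$ via a translation argument ($(a,b) \mapsto (a-(t_2-t_1),b)$). For the lower bound, independence gives $\p(Y_{n,1}+Y_{n,2}>c)\geq \p(Y_{n,1}>a)\p(Y_{n,2}>b)$ for every $a+b\geq c$; then $\frac{1}{v_n}\log$, subadditivity of $\varliminf$, and the defining inequality $\varliminf_n\frac{1}{v_n}\log \p(Y_{n,i}>u)\geq -\underline{I}_i(a')$ (valid for $u<a'$) yield $\varliminf_n\frac{1}{v_n}\log\p(Y_{n,1}+Y_{n,2}>c)\geq -\underline{I}_1(a)-\underline{I}_2(b)$ whenever $a+b>c$ (up to a harmless left-shift of $(a,b)$ to strictly enter the open region). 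Optimizing over such $(a,b)$ and then over $t>c$ gives $-\inf_{t>c}\underline{I}(t)$.

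The upper bound is the substantive part. Fix $\eta>0$ small and $M>0$ large, and by \textbf{(H)} pick $d>|c|+1$ such that both opposite-sign events $\{Y_{n,1}>d,\,Y_{n,2}<-d\}$ and its mirror satisfy $\varlimsup_n\frac{1}{v_n}\log\p(\cdot)<-M$. Partition $\R$ into cells of length $\eta$, grouped into $(-\infty,-d)$, $[-d,d]$, $[d,R]$, and $[R,\infty)$, where $R$ is chosen so that $\overline{I}_i(R)\geq M$ for $i\in\{1,2\}$; such $R$ exists when $\inf_{t\geq c}\overline{I}(t)$ is finite since the inf-convolution structure forces $\lim_{u\to\infty}\overline{I}_i(u)\geq \inf_{t\geq c}\overline{I}(t)$, and $M$ is chosen just below this target. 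By independence,
\[
\p(Y_{n,1}+Y_{n,2}\geq c)\leq \sum_{(k,l):(J_k+J_l)\cap[c,\infty)\neq\emptyset}\p(Y_{n,1}\in J_k)\p(Y_{n,2}\in J_l).
\]
The block $(-\infty,-d)\times(-\infty,-d)$ is empty since the sum is $<-2d<c$; the opposite-sign tails and the blocks involving $[R,\infty)$ each satisfy $\varlimsup_n\frac{1}{v_n}\log(\cdot)\leq -M$ (by \textbf{(H)} or by $\overline{I}_i(R)\geq M$); and the remaining admissible pairs form a finite collection handled by Lemma \ref{lm-magique}, which replaces $\varlimsup_n\frac{1}{v_n}\log$ of the finite sum by the maximum of the cellwise bounds $-\overline{I}_1(a_k-\eta)-\overline{I}_2(b_l-\eta)$ with $a_k+b_l\geq c-2\eta$ (here we use $\varlimsup_n\frac{1}{v_n}\log\p(Y_{n,i}>u)\leq -\overline{I}_i(u)$, a direct consequence of the monotonicity of $u\mapsto \varlimsup_n\frac{1}{v_n}\log\p(Y_{n,i}>u)$). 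Sending $\eta\to 0$ converts the discrete maximum into $-\inf_{a+b\geq c}(\overline{I}_1(a)+\overline{I}_2(b)) = -\inf_{t\geq c}\overline{I}(t)$, and letting $M$ approach the target value from below yields the bound. The delicate point is the joint choice of $d,R,\eta,M$ so that all tail contributions stay asymptotically dominated by $-\inf_{t\geq c}\overline{I}(t)$ while the discretization faithfully approximates this infimum.
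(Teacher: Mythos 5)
Your lower bound and monotonicity arguments match the paper's. The upper bound is where you diverge, and this is where the substance lies. You discretize $[-d,R]^2$ into $\eta$-cells, push the far tails beyond $R$ and beyond $-d$ into negligible events, and let $\eta\to0$, $M\to\inf_{t\geqslant c}\overline{I}(t)$. The paper proceeds differently: it considers the compact anti-diagonal segment $K=\{(a,b)\in\intervallefo{-d}{\infty}^2 : a+b=c\}$, associates to each $(a,b)\in K$ an ``upper-right corner'' $(u_a,\infty)\times(v_b,\infty)$ with $u_a<a$, $v_b<b$ realizing the rate $\overline{I}_1(a)+\overline{I}_2(b)$ up to a $\delta$-truncation, extracts a finite subcover of $K$, and observes that these finitely many corners automatically cover the whole region $Z=\{(a,b)\in\intervallefo{-d}{\infty}^2 : a+b\geqslant c\}$, unbounded part included. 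This avoids any upper cutoff $R$ and yields $\overline{I}(c)$ directly rather than $\overline{I}(c-O(\eta))$.

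There is a genuine gap in your argument, precisely where you avoid compactness. You justify the existence of $R$ with $\overline{I}_i(R)\geqslant M$ by asserting that ``the inf-convolution structure forces $\lim_{u\to\infty}\overline{I}_i(u)\geqslant\inf_{t\geqslant c}\overline{I}(t)$''. This is not a consequence of the inf-convolution alone. Sending $a\to+\infty$ in $\overline{I}(c)\leqslant\overline{I}_1(a)+\overline{I}_2(c-a)$ only gives $\overline{I}(c)\leqslant L_1+m_2$, where $L_1=\lim_{u\to\infty}\overline{I}_1(u)$ and $m_2=\lim_{b\to-\infty}\overline{I}_2(b)$; to conclude $L_1\geqslant\overline{I}(c)$ one would need $m_2=0$, which is false in general (take $\overline{I}_1\equiv 0$, $\overline{I}_2\equiv 1$, realized e.g.\ by a fixed light-tailed $Y_{n,1}$ and a two-point $Y_{n,2}$ with $\p(Y_{n,2}=a_n)=e^{-v_n}$, $a_n\to\infty$ slowly; then $\overline{I}(c)=1>0=L_1$). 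The correct route is to derive $L_i\geqslant\inf_{t\geqslant c}\overline{I}(t)$ from assumption \textbf{(H)}: if $L_1<\overline{I}(c)<\infty$, then $\overline{I}_2\geqslant\overline{I}(c)-L_1>0$ everywhere, so $\p(Y_{n,2}>u)\to 0$ for every fixed $u$, hence $\p(Y_{n,2}<-d)\to 1$ for every $d$ and $\varlimsup \frac{1}{v_n}\log\p(Y_{n,1}>d,\,Y_{n,2}<-d)\geqslant g_1(d)\geqslant -L_1$ for every $d$, contradicting \textbf{(H)} once $M>L_1$. (Note that in the counterexample above, \textbf{(H)} indeed fails.) Without this argument, or the paper's covering trick which sidesteps the issue entirely, the cutoff at $R$ is unjustified. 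A secondary, minor point: your $\eta\to 0$ limit controls $\overline{I}(c-O(\eta))$ rather than $\overline{I}(c)$, so you implicitly invoke left-continuity of $\overline{I}$. This does hold — each $\overline{I}_i$ is left-continuous by construction ($\overline{I}_i(a)=-\inf_{u<a}g_i(u)$ with $g_i$ nonincreasing), and this property is preserved under the inf-convolution given the monotone structure — but it deserves a line, whereas the paper's $\delta$-truncation $(\overline{I}_1(a)+\overline{I}_2(b))^{[\delta]}$ avoids the issue altogether.
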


\begin{rem}
A sufficient condition for assumption \textbf{(H)} is: for $i \in \{ 1, 2 \}$,
\[
\lim_{a\rightarrow \infty} \overline{I}_i(a)=\infty .
\]
\end{rem}

\begin{proof}
Obviously, the functions $\underline I_1$, $\overline I_1$, $\underline I_2$, and $\overline I_2$ are nondecreasing.
Let us prove that $\underline I$ is nondecreasing, the proof for $\overline I$ being similar. Let $t_1 < t_2$, let $\eta > 0$, and let $a \in \R$ be such that $\underline I(t_2) \geqslant \underline I_1(a) + \underline I_2(t_2-a) - \eta$. Since $\underline I_2$ is nondecreasing, we have
\[
\underline I(t_1)
 \leqslant \underline I_1(a) + \underline I_2(t_1-a)
 \leqslant \underline I_1(a) + \underline I_2(t_2-a)
 \leqslant \underline I(t_2) + \eta ,
\]
which completes the proof of the monotony of $\underline I$, letting $\eta \to 0$. 

\medskip

\textit{Lower bound.} Let $c\in \R$, let $t>c$, and let $\delta > 0$ be such that $0<2\delta<t-c$. For all $(a,b)\in \R^2$ such that $a+b=t$, we have
\begin{align*}
\varliminf \frac{1}{v_n} \log \p(Y_{n,1}+Y_{n,2}>c)
 & \geqslant \varliminf \frac{1}{v_n} \log\p(Y_{n,1}>a-\delta) + \varliminf \frac{1}{v_n}  \log\p(Y_{n,2}>b-\delta) \\
 & \geqslant -\underline I_1(a)-\underline I_2(b) .
\end{align*}
Therefore,
\[
\varliminf \frac{1}{v_n}  \log \p(Y_{n,1}+Y_{n,2}>c)
 \geqslant \sup_{t>c} \sup_{\substack{ (a,b) \in \R^2 \\ a+b=t}} (-\underline I_1(a)-\underline I_2(b))
 = - \inf_{t> c} \underline I(t) .
\]

\textit{Upper bound.} Let $c \in \R$ and let $M > 0$. Let $d > 0$ be given by assumption \textbf{(H)}. Define
\[
Z = \{ (a,b) \in \intervallefo{-d}{\infty}^2 \ ;\ a+b \geqslant c \}
\quad \text{and} \quad
K = \{ (a,b) \in \intervallefo{-d}{\infty}^2 \ ;\ a+b = c \} .
\]
Write
\begin{align*}
\p(Y_{n,1}+Y_{n,2} \geqslant c)
 & \leqslant \p(Y_{n,1}>d,\ Y_{n,2}<-d) + \p(Y_{n,1}<-d,\ Y_{n,2}>d) + \p\bigl((Y_{n,1},Y_{n,2}) \in Z \bigr) \\
 & \eqdef Q_{n,1}+Q_{n,2}+Q_{n,3} .
\end{align*}
By assumption,
\[
\varlimsup \frac{1}{v_n}  \log(Q_{n,1}) < -M
\quad \text{and} \quad
\varlimsup \frac{1}{v_n}  \log(Q_{n,2}) < -M .
\]
Let us estimate $\varlimsup v_n^{-1}  \log Q_{n,3}$. For all $(a,b) \in K$,
\begin{align*}
- \inf_{\substack{u<a \\ v<b}}
 & \varlimsup\frac{1}{v_n} \log \p\bigl( Y_{n,1} > u,\ Y_{n,2} > v \bigr) \\
 & \geqslant - \inf_{u<a} \varlimsup \frac{1}{v_n}  \log \p(Y_{n,1}>u) - \inf_{v<b} \varlimsup \frac{1}{v_n}  \log \p(Y_{n,2}>v) \\
 & = \overline{I}_1(a) + \overline{I}_2(b) .
\end{align*}
Defining $\theta^{[\delta]} \defeq \min(\theta-\delta,\delta^{-1})$ for all $\delta>0$ and for all $\theta \in (-\infty,\infty]$, there exists $u_a<a$ and $v_b<b$ such that
\begin{equation} \label{upper-bound-delta}
- \varlimsup \frac{1}{v_n}  \log \p\bigl( Y_{n,1} > u_a,\ Y_{n,2} > v_b \bigr)
 \geqslant (\overline{I}_1(a) + \overline{I}_2(b))^{[\delta]} .
\end{equation}
From the cover $((u_a,\infty) \times (v_b,\infty))_{(a,b) \in K}$ of the compact subset $K$, we can extract a finite subcover $((u_{a_i},\infty) \times (v_{b_i},\infty))_{1 \leqslant i \leqslant p}$. Since
\[
Z \subset \bigcup_{i=1}^p (u_{a_i},\infty) \times (v_{b_i},\infty),
\]
we obtain, thanks to Lemma \ref{lm-magique} and \eqref{upper-bound-delta},
\begin{align*}
\varlimsup \frac{1}{v_n}  \log Q_3
 & \leqslant \varlimsup \frac{1}{v_n}  \log \sum_{i=1}^p \p\bigl(Y_{n,1} > u_{a_i},\ Y_{n,2} > v_{b_i} \bigr) \\
 & = \max_{1 \leqslant i \leqslant p} \bigl\{ \varlimsup \frac{1}{v_n}  \log \p\bigl(Y_{n,1} > u_{a_i},\ Y_{n,2} > v_{b_i}\bigr) \bigr\} \\
 & \leqslant \max_{1 \leqslant i \leqslant p} \bigl\{ - (\overline{I}_1(a_i) + \overline{I}_2(b_i))^{[\delta]} \bigr\}\\
 & \leqslant -\inf_{\substack{ (a,b) \in \R^2 \\ a+b=c}}\bigl\{ (\overline{I}_1(a)+\overline{I}_2(b))^{[\delta]}\bigr\} .
\end{align*}
Letting $\delta \to 0$ and using the definition of $\overline{I}$, we deduce that
\begin{align*} 
\varlimsup \frac{1}{v_n}  \log Q_3
 & \leqslant -\inf_{\substack{ (a,b) \in \R^2 \\ a+b=c}}(\overline{I}_1(a)+\overline{I}_2(b)) 
 = -\overline{I}(c)
 = -\inf_{t\geqslant c} \overline{I}(t) .
\end{align*}
Letting $M \to \infty$, we get the desired upper bound.
\end{proof}

\section{Proof of Theorems \ref{tm-gaussian} and \ref{tm-inter}}\label{sec:proof_13}

From now on, all non explicitly mentioned asymptotics are taken as $n \to \infty$. Replacing $X$ by $q^{-1/(1-\epsilon)} X$, we may suppose without loss of generality that
\begin{equation}
\label{eq:behav_X1}
\log \p(X\geqslant x) \sim -x^{1-\epsilon} \quad \text{as $x\to \infty$.}
\end{equation}

The conclusions of Theorem \ref{tm-gaussian} and \ref{tm-inter} follow from Lemmas \ref{lem:eq-gauss-Pin0}, \ref{lem:eq-gauss-Pin1_inter}, \ref{lem:maj_pinm} below, and the principle of the largest term (Lemma \ref{lm-magique}).

\subsection{Principal estimates}

By \eqref{eq:behav_X1}, the Laplace transform $\Lambda_X$ of $X$ is not defined at the right of zero. In order to use the standard exponential Chebyshev inequality anyway, we introduce the following decomposition:
\begin{align*}
\Prob(S_n \geqslant x_n) & = \sum_{m=0}^n \binom{n}{m} \Pi_{n,m}(x_n) 
\end{align*}
where, for all $m \in \intervallentff{0}{n}$ and for all $a \geqslant 0$,
\[
\Pi_{n,m}(a) \defeq \Prob(S_n \geqslant a,\ \forall i \in \intervallentff{1}{m} \quad  X_i\geqslant x_n^\epsilon  ,\ \forall i \in \intervallentff{m+1}{n} \quad X_i < x_n^\epsilon).
\]

Note that the only relevant truncation is at $x_n^\epsilon$ (and not at $x_n$ as in \cite{Nagaev69-1,Nagaev69-2}). The asymptotics we want to prove are given by Lemmas \ref{lem:eq-gauss-Pin0} and \ref{lem:eq-gauss-Pin1_inter}, the proofs of which rely on the unilateral version of Gärtner-Ellis theorem (Theorem \ref{th:plachky}) and on the unilateral sum-contraction principle (Proposition \ref{principe-contraction-couple}).

\begin{lm} \label{lem:eq-gauss-Pin0}
Let $C > 0$. If $n^{1/2} \ll x_n  \leqslant C n^{1/(1+\epsilon)}$ and $t > 0$, then
\begin{equation} \label{eq-gauss-Pin0}
\lim \frac{n}{x_n^2} \log\Pi_{n,0}(tx_n) = -\frac{t^2}{2\sigma^2} .
\end{equation}
\end{lm}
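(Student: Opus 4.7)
My plan is to reduce $\Pi_{n,0}$ to a sum of i.i.d.\ conditioned variables and apply the unilateral Gärtner--Ellis theorem (Theorem \ref{th:plachky}). Set $p_n := \Prob(X \geqslant x_n^\epsilon)$, $v_n := x_n^2/n$, and let $\hat X_1, \dots, \hat X_n$ be i.i.d.\ with the law of $X$ conditioned on $\{X < x_n^\epsilon\}$, with $\hat S_n := \hat X_1 + \dots + \hat X_n$. Independence of the $X_i$'s yields
\[
\Pi_{n,0}(tx_n) = (1-p_n)^n\, \Prob(\hat S_n \geqslant tx_n).
\]
By \eqref{eq:behav_X1}, $p_n \leqslant e^{-x_n^{\epsilon(1-\epsilon)}/2}$ for $n$ large, and since $x_n \gg n^{1/2}$ a direct estimate gives $n^2 p_n/x_n^2 \to 0$, hence $n\log(1-p_n) = o(v_n)$. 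Consequently, it suffices to prove $v_n^{-1}\log\Prob(\hat S_n \geqslant tx_n) \to -t^2/(2\sigma^2)$.

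For this I would apply Theorem \ref{th:plachky} to $Y_n := \hat S_n / x_n$ at speed $v_n$. Writing $\E[e^{v_n \lambda Y_n}] = \E[e^{\alpha_n \hat X_1}]^n$ with $\alpha_n := \lambda x_n/n$ (which tends to $0$ thanks to $x_n \leqslant C n^{1/(1+\epsilon)}$), the hypothesis of the theorem reduces to showing, for every $\lambda \geqslant 0$,
\[
\log \E\bigl[e^{\alpha_n \hat X_1}\bigr] = \frac{\sigma^2 \alpha_n^2}{2} + o(\alpha_n^2),
\]
which yields $\Lambda(\lambda) = \sigma^2\lambda^2/2$ and $\Lambda^*(t) = t^2/(2\sigma^2)$, with $\Lambda'$ a bijection of $\R_+$. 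The super-polynomial smallness of $p_n$ allows me to replace $\hat X_1$ by the truncation $\tilde X := X\indic_{X<x_n^\epsilon}$, whose mean is $-\E[X\indic_{X\geqslant x_n^\epsilon}] = o(\alpha_n)$ and whose variance converges to $\sigma^2$. A second-order expansion $e^y = 1 + y + y^2/2 + r(y)$, $r(y) := e^y - 1 - y - y^2/2$, then gives
\[
\E\bigl[e^{\alpha_n \tilde X}\bigr] = 1 + \alpha_n\E[\tilde X] + \tfrac{1}{2}\alpha_n^2 \E[\tilde X^2] + R_n, \quad R_n := \E\bigl[r(\alpha_n X)\indic_{X<x_n^\epsilon}\bigr],
\]
and the Laplace asymptotic follows provided $R_n = o(\alpha_n^2)$.

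The main obstacle is controlling $R_n$, which I would do by splitting according to the sign of $X$. On $\{X<0\}$, the bound $|r(y)| \leqslant C_\gamma|y|^{2+\gamma}$ holds on $(-\infty,0]$ (since $|r(y)|/|y|^{2+\gamma}$ is continuous and vanishes both at $0$ and at $-\infty$), so \eqref{hyp1} gives a contribution of order $\alpha_n^{2+\gamma}\rho = o(\alpha_n^2)$. On $\{0\leqslant X<x_n^\epsilon\}$ the product $\alpha_n X$ lies in the bounded interval $[0,\lambda C^{1+\epsilon}]$ (this is precisely where the assumption $x_n \leqslant C n^{1/(1+\epsilon)}$ is used), hence $|r(\alpha_n X)| \leqslant C_\lambda \alpha_n^3 X^3$; bounding $X^3 \leqslant x_n^{\epsilon(1-\gamma)}|X|^{2+\gamma}$ on this event (assuming without loss of generality $\gamma<1$) and using \eqref{hyp1} yields a contribution of order $\alpha_n^3 x_n^{\epsilon(1-\gamma)}$, which is $o(\alpha_n^2)$ because the exponent $(1+\epsilon(1-\gamma))/(1+\epsilon) - 1 = -\epsilon\gamma/(1+\epsilon)$ is strictly negative. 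Once $\Lambda(\lambda) = \sigma^2\lambda^2/2$ is established, Theorem \ref{th:plachky} applied at $c = t > 0$ yields matching lim inf and lim sup bounds equal to $-t^2/(2\sigma^2)$, since $\Lambda^*$ is continuous and strictly increasing at $t$.
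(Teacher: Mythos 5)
Your proposal is correct and follows essentially the same route as the paper: factor $\Pi_{n,0}(tx_n)$ through the conditional law of $X$ given $\{X<x_n^\epsilon\}$, discard the $(1-p_n)^n$ prefactor (super-polynomially close to $1$), and apply the unilateral Gärtner--Ellis theorem at speed $v_n=x_n^2/n$, showing via a second-order Taylor expansion that $\Lambda(\lambda)=\sigma^2\lambda^2/2$ so that $\Lambda^*(t)=t^2/(2\sigma^2)$. The only (harmless) difference lies in how the Taylor remainder is controlled: the paper invokes a single bound $|e^s-(1+s+s^2/2)|\leqslant c|s|^{2+\gamma}$ valid uniformly for $s\leqslant C^{1+\epsilon}$, and then uses Hölder's inequality against $\p(X\geqslant x_n^\epsilon)$ for the tail contribution, whereas you split the remainder by the sign of $X$, using the $|s|^{2+\gamma}$ bound on the negative half-line and an $|s|^3$ bound (with the extra factor $x_n^{\epsilon(1-\gamma)}$ absorbed by the $-\epsilon\gamma/(1+\epsilon)$ exponent) on the bounded positive range; both hinge on the same ingredients, namely $\E[|X|^{2+\gamma}]<\infty$, the boundedness $\alpha_n X\leqslant\lambda C^{1+\epsilon}$ coming from $x_n\leqslant Cn^{1/(1+\epsilon)}$, and the super-polynomial decay of $\p(X\geqslant x_n^\epsilon)$ forced by $x_n\gg n^{1/2}$.
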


\begin{proof}
Let us introduce $\overline{X}$ with distribution $\mathcal{L}(X\ |\ X < x_n ^\epsilon)$. For all $n \in \N^*$, let $\overline{X}_1$, $\overline{X}_2$, ..., $\overline{X}_n$ be i.i.d.\ copies of $\overline{X}$ and let $\overline{S}_n=\overline{X}_1+\dots+\overline{X}_n$, so that
\begin{align*}
\Pi_{n,0}(tx_n)
 & =\p(S_n\geqslant tx_n \,,\, X_1,\ldots, X_n<x_n ^{\epsilon})
 = \p(\overline{S}_n \geqslant tx_n ) \p(X < x_n^{\epsilon})^n
 \sim \p(\overline{S}_n \geqslant tx_n ) ,
\end{align*}
by \eqref{eq:behav_X1}.
We want to apply Theorem \ref{th:plachky} to the random variables $\overline{S}_n/x_n $ with $v_n =x_n ^2/n$. For $u>0$,
\begin{align}
\frac{n}{x_n ^2}\log \E \Bigl[e^{u\frac{x_n ^2}{n}\frac{\overline{S}_n}{x_n }}\Bigr]
=\frac{n^2}{x_n ^2}\log \E \left[e^{\frac{u x_n  X}{n}} \indic_{X < x_n ^\epsilon} \right] - \frac{n^2}{x_n ^2}\log \p(X < x_n ^\epsilon) \label{eq:gaussian_1} .
\end{align}
The second term in the right side of the above equation goes to 0 as $n\to \infty$ since $\log\p(X<x_n ^{\epsilon})\sim -\p(X>x_n ^{\epsilon})= O(e^{-x_n ^{\epsilon(1-\epsilon)}/2})$, by \eqref{eq:behav_X1}. As for the first term, if $y < x_n ^{\epsilon}$, then $x_n y/n \leqslant {x_n ^{1+\epsilon}}/{n} \leqslant C^{1+\epsilon}$. Now, up to changing $\gamma$ in $\gamma \wedge 1$, \eqref{eq:behav_X} is true for some $\gamma \in \intervalleof{0}{1}$ and there exists $c > 0$ such that, for all $s \leqslant C^{1+\epsilon}$, $|e^s - (1 + s + s^2/2)| \leqslant c|s|^{2+\gamma}$. Hence,
\begin{align}
 & \left| \E \left[e^{\frac{u x_n  X}{n}} \indic_{X < x_n ^\epsilon} \right] - e^{\frac{u^2 x_n ^2 \sigma^2}{2n^2}} \right| \nonumber \\
\leqslant & \left| \E \left[e^{\frac{u x_n  X}{n}} \indic_{X < x_n ^\epsilon} \right] - \E \left[\left(1+\frac{ux_n X}{n}+\frac{u^2x_n ^2X^2}{2n^2}\right) \indic_{X < x_n ^\epsilon} \right] \right| \nonumber \\
 & \hspace{2cm} + \left| \E \left[\left(1+\frac{ux_n X}{n}+\frac{u^2x_n ^2X^2}{2n^2}\right) \indic_{X < x_n ^\epsilon} \right] - \left( 1 + \frac{u^2 x_n ^2 \sigma^2}{2n^2} \right) \right| \nonumber \\
 & \hspace{2cm} + \left| \left( 1 + \frac{u^2 x_n ^2 \sigma^2}{2n^2} \right) - e^{\frac{u^2 x_n ^2 \sigma^2}{2n^2}} \right| \nonumber \\
\leqslant & \mathop{} c \rho \left( \frac{ux_n }{n} \right)^{2+\gamma} + \left| \E \left[\left(1+\frac{ux_n X}{n}+\frac{u^2x_n ^2X^2}{2n^2}\right) \indic_{X \geqslant x_n ^\epsilon} \right] \right| + o\left( \frac{x_n^2}{n^2} \right) . \label{eq:gaussian_2}
\end{align}
For $n$ large enough, applying Hölder's inequality,
\begin{align}
\left| \E \left[\left(1+\frac{ux_n X}{n}+\frac{u^2x_n ^2X^2}{2n^2}\right) \indic_{X \geqslant x_n ^\epsilon} \right] \right|
 & \leqslant \E[X^2 \indic_{X \geqslant x_n ^\epsilon}] \nonumber \\
 & \leqslant \E[X^{2+\gamma}]^{2/(2+\gamma)} \p(X \geqslant x_n ^\epsilon)^{\gamma/(2+\gamma)} \nonumber \\
 & = o\left( \frac{x_n ^2}{n^2} \right) \label{eq:gaussian_3} ,
\end{align}
by \eqref{eq:behav_X1}. Combining \eqref{eq:gaussian_1}, \eqref{eq:gaussian_2}, and \eqref{eq:gaussian_3}, we get
\[
\frac{n}{x_n ^2}\log \E \Bigl[e^{u\frac{x_n ^2}{n}\frac{\overline{S}_n}{x_n }}\Bigr] \to \frac{u^2 \sigma^2}{2} \defeq \Lambda(u) .
\]
Since $\Lambda^*(t) = t/{2\sigma^2}$, \eqref{eq-gauss-Pin0} stems from Theorem \ref{th:plachky}.
\end{proof}

\begin{lm}\label{lem:eq-gauss-Pin1_inter}
Let $C>0$. If $x_n=Cn^{1/(1+\epsilon)}$, then
\begin{align*}
\lim \frac{n}{x_n^2}\log \Pi_{n,1}(x_n) = -J(C).
\end{align*}
\end{lm}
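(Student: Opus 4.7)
The strategy is to apply the unilateral sum-contraction principle (Proposition \ref{principe-contraction-couple}) to the pair made of the single ``large'' summand and the centered truncated sum of the other $n-1$ summands. By independence, one factorises
\[
\Pi_{n,1}(x_n) = \p(X \geqslant x_n^\epsilon)\,\p(X < x_n^\epsilon)^{n-1}\,\p\bigl(\tilde X_n + \overline{S}_{n-1} \geqslant x_n\bigr),
\]
where $\tilde X_n \sim \mathcal{L}(X\,|\,X \geqslant x_n^\epsilon)$, each $\overline X_i \sim \mathcal{L}(X\,|\,X < x_n^\epsilon)$, $\overline S_{n-1} = \overline X_1 + \dots + \overline X_{n-1}$, and $\tilde X_n$ is independent of $\overline S_{n-1}$. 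With $v_n \defeq x_n^2/n$ and $x_n = C n^{1/(1+\epsilon)}$, \eqref{eq:behav_X1} yields $\log \p(X \geqslant x_n^\epsilon) = -x_n^{\epsilon(1-\epsilon)}(1+o(1))$ and $(n-1)\log \p(X < x_n^\epsilon) = -(n-1)\p(X \geqslant x_n^\epsilon)(1+o(1))$, both of which are $o(v_n)$. Hence
\[
\frac{n}{x_n^2}\log \Pi_{n,1}(x_n) = \frac{1}{v_n}\log \p\bigl(Y_{n,1} + Y_{n,2} \geqslant 1\bigr) + o(1),
\]
with $Y_{n,1} \defeq \tilde X_n/x_n$ and $Y_{n,2} \defeq \overline S_{n-1}/x_n$.

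I then compute the one-sided logarithmic rates needed by Proposition \ref{principe-contraction-couple}. For $u > 0$ and $n$ large enough that $u x_n > x_n^\epsilon$, $\p(Y_{n,1} > u) = \p(X > u x_n)/\p(X \geqslant x_n^\epsilon)$, so \eqref{eq:behav_X1} gives $v_n^{-1}\log \p(Y_{n,1} > u) \to -u^{1-\epsilon}/C^{1+\epsilon}$; for $u \leqslant 0$ the probability is eventually $1$. Hence $\underline I_1(a) = \overline I_1(a)$ equals $a^{1-\epsilon}/C^{1+\epsilon}$ for $a > 0$ and vanishes for $a \leqslant 0$. For $Y_{n,2}$, re-running the computation in the proof of Lemma \ref{lem:eq-gauss-Pin0} with $n-1$ summands and threshold $u x_n$ in place of $x_n$ (keeping the truncation at $x_n^\epsilon$, since $u x_n \cdot x_n^\epsilon / n \leqslant u C^{1+\epsilon}$ remains bounded, so the Taylor expansion still applies) yields $v_n^{-1}\log \p(Y_{n,2} > u) \to -u^2/(2\sigma^2)$ for $u > 0$; Chebyshev gives log-rate $0$ for $u \leqslant 0$. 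Thus $\underline I_2(b) = \overline I_2(b)$ equals $b^2/(2\sigma^2)$ for $b > 0$ and vanishes for $b \leqslant 0$. Assumption \textbf{(H)} follows from independence: $\p(Y_{n,1} > d, Y_{n,2} < -d) \leqslant \p(Y_{n,1} > d)$ has log-rate $\to -d^{1-\epsilon}/C^{1+\epsilon} \to -\infty$, and symmetrically $\p(Y_{n,1} < -d, Y_{n,2} > d) \leqslant \p(Y_{n,2} > d)$ has log-rate $\to -d^2/(2\sigma^2) \to -\infty$.

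Proposition \ref{principe-contraction-couple} then gives $v_n^{-1}\log \p(Y_{n,1}+Y_{n,2} \geqslant 1) \to -I(1)$, where $I(t) = \inf_{a+b=t}(I_1(a)+I_2(b))$: the proposition asserts that $I$ is nondecreasing, so $\inf_{t\geqslant 1}I(t) = I(1)$, and a standard near-minimiser argument (any $\eta$-minimiser for $I(1)$ chosen in $[0,1]$ remains admissible for $I(t)$ when $t > 1$ is close to $1$) yields right-continuity at $t=1$, so $\inf_{t>1}I(t) = I(1)$ as well. The minimiser in $I(1)$ can be restricted to $a \in [0,1]$ (the regimes $a \leqslant 0$ and $a \geqslant 1$ reduce to the endpoints with values $1/(2\sigma^2)$ and $1/C^{1+\epsilon}$ respectively), so, substituting $s = 1-a$,
\[
I(1) = \inf_{s \in [0,1]}\left\{\frac{(1-s)^{1-\epsilon}}{C^{1+\epsilon}} + \frac{s^2}{2\sigma^2}\right\} = J(C),
\]
after the normalisation $q = 1$ imposed at the start of Section \ref{sec:proof_13}. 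The only delicate point is the clean invocation of the Gaussian rate (Lemma \ref{lem:eq-gauss-Pin0} or its proof) at the shifted threshold $u x_n$ with $n-1$ summands truncated at $x_n^\epsilon$; once that is secured, the remainder of the argument is a routine bookkeeping built on the sum-contraction principle and an elementary minimisation.
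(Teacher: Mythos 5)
Your proof is correct and follows essentially the same route as the paper's: factor out the prefactors $\p(X\geqslant x_n^\epsilon)\p(X<x_n^\epsilon)^{n-1}$ (negligible at scale $v_n=x_n^2/n$), apply Proposition~\ref{principe-contraction-couple} to $Y_{n,1}=x_n^{-1}X_1$ conditioned on $\{X_1\geqslant x_n^\epsilon\}$ and $Y_{n,2}=x_n^{-1}(X_2+\dots+X_n)$ conditioned on $\{X_i<x_n^\epsilon\}$, identify $I_1$ from \eqref{eq:behav_X1} and $I_2$ from Lemma~\ref{lem:eq-gauss-Pin0}, and pass from $\inf_{t>1}I(t)$ to $I(1)$ by monotonicity plus right-continuity. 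The only cosmetic deviation is that you keep $I_1(a)=a^{1-\epsilon}/C^{1+\epsilon}$ for all $a>0$ (which is in fact the correct value, since $Y_{n,1}$ is not truncated above), whereas the paper sets $I_1(a)=\infty$ for $a>1$; this does not change $I(1)$, and your near-minimiser argument for $\inf_{t>1}I(t)=I(1)$ is a sound substitute for the paper's appeal to upper semicontinuity.
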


\begin{proof}
Recall that, for $n \in \N^*$,
\begin{align*}
\Pi_{n,1}(x_n)
&= \p(S_n\geqslant x_n \,|\,  X_1\geqslant  x_n ^{\epsilon} \,,\, X_2,\ldots,X_n< x_n ^{\epsilon})\p(  X \geqslant  x_n ^{\epsilon} )\p(X < x_n^{\epsilon})^{n-1}.
\end{align*}
Using \eqref{eq:behav_X1},
it suffices to prove that
\[
\frac{n}{x_n^2} \log \p(S_n\geqslant x_n \ | X_1 \geqslant  x_n ^{\epsilon} ,\ X_2, \dots, X_n < x_n ^{\epsilon}) \to -J(C) .
\]
To do so, we apply the contraction principle of Proposition \ref{principe-contraction-couple} to $(Y_{n,1}, Y_{n,2})$ with
\[
\mathcal{L}(Y_{n,1}) = \mathcal{L}(x_n ^{-1} X_1\ |\ X_1  \geqslant  x_n ^{\epsilon} )
\quad \text{and} \quad
\mathcal{L}(Y_{n,2}) = \mathcal{L}(x_n ^{-1}(X_2+\dots+X_n)\ |\ X_2,\dots,X_n < x_n ^{\epsilon}) ,
\]
and $v_n=x_n ^{2}/n$. First, one has obviously, $\p( X \geqslant ux_n  \ |\  X  \geqslant  x_n ^{\epsilon}) = 1$ for $u \leqslant 0$ and 
$\p(X \geqslant ux_n  \ |\  X  \geqslant  x_n ^{\epsilon} ) = 0$ for $u \geqslant 1$. In addition, for $u \in (0,1)$, using \eqref{eq:behav_X1},
\begin{align*}
\log \p(X \geqslant ux_n  \ |\  X  \geqslant  x_n ^{\epsilon} )
 & \sim -(ux_n )^{1-\epsilon} .
\end{align*}
Using the notation of Proposition \ref{principe-contraction-couple}, 
it follows that $\underline{I}_1(a) = \overline{I}_1(a) = I_1(a)$, where
\begin{equation}
\label{I1}
I_1(a)
 = \sup_{u<a}\left\{ - \lim \frac{n}{x_n^2}\log \p(X \geqslant ux_n \ |\ X \geqslant  x_n ^{\epsilon} ) \right\}
 = \begin{cases}
0 & \text{if $a< 0$,} \\
C^{-(1+\epsilon)}a^{1-\epsilon} & \text{if $a\in [0,1]$,} \\
\infty & \text{if $a>1$.}
\end{cases}
\end{equation} 
Moreover, for all $u>0$,
\[
\frac{n}{x_n^2} \log \p(X_2+\ldots+X_n \geqslant ux_n \ |\ X_2, \dots, X_n<x_n ^{\epsilon})
 = \frac{n}{x_n^2} \log \Pi_{n-1,0}(ux_n )
 \to -\frac{u^2 }{2\sigma^2} ,
\] 
by Lemma \ref{lem:eq-gauss-Pin0}. Thus, we have $I_2(b)=b^2/(2\sigma^2)$ for all 
$b>0$ and, since $I_2$ is a nondecreasing and nonnegative function, we get $I_2(b) = 0$ for all $b \leqslant 0$.
This, together with \eqref{I1}, leads to: for all $t\in \R$,
\[
I(t)
 = \inf_{\substack{ a+b=t \\ (a,b)\in \R^2}} \{ I_1(a) + I_2(b) \}
 = \inf_{t-1 \leqslant b \leqslant t} \{ I_1(t-b) + I_2(b) \}
 = \inf_{t-1 \leqslant b \leqslant t} \left\{ \frac{(t-b)^{1-\epsilon}}{C^{1+\epsilon}} + \frac{b^2 }{2\sigma^2} \right\} ,
\]
since $b < t-1$ entails $I_1(t-b) = \infty$ and $b > t$ entails $I_1(t-b)+I_2(b) > I_1(0) + I_2(t)$. It is a standard result (\emph{see, e.g.}, \cite[4.c.]{Moreau_1967_FonctionnellesConvexes}) that $I$ is upper semicontinuous. Since $I$ is also nondecreasing, $I$ is right continuous and we get
\[
\inf_{t \geqslant 1} I(t) = \inf_{t>1} I(t) = I(1) .
\]
Applying Proposition \ref{principe-contraction-couple}, this completes the proof.
\end{proof}

Notice that the very same argument shows that:
\begin{itemize}
\item if $x_n=Cn^{1/(1+\epsilon)}$, then, for all $m \geqslant 1$,
\begin{align*}
\lim \frac{n}{x_n^2}\log \Pi_{n,m}(x_n) = - J(C) ;
\end{align*}
\item if $x_n \ll n^{1/(1+\epsilon)}$, then, for all $m \geqslant 1$,
\begin{align*}
\lim \frac{n}{x_n^2}\log \Pi_{n,m}(x_n) = - \frac{1}{2 \sigma^2} .
\end{align*}
\end{itemize}
Our last step consist in proving that these estimates also hold for $\sum_{m=2}^n \binom{n}{m} \Pi_{n,m}(x_n)$ instead of $\Pi_{n,m}(x_n)$.

\subsection{Two uniform bounds}

\begin{lm} \label{lem:xi_small}
Fix a sequence $x_n \to \infty$. For all $\delta \in \intervalleoo{0}{1}$ and $M > 0$, there exists $n(\delta,M) \geqslant 1$ such that, for all $n \geqslant n(\delta,M)$, for all $m \in \intervallentff{0}{n}$, for all $u \in \intervalleff{0}{M n x_n^{-\epsilon}}$,
\[
\log \Prob(S_m \geqslant u,\ \forall i \in \intervallentff{1}{m} \quad X_i < x_n^\epsilon)
 \leqslant - \frac{(1-\delta) u^2}{2 n \sigma^2} .
\]
In particular, if $x_n \leqslant C n^{1/(1+\epsilon)}$, taking $M = C^{1+\epsilon}$, the bound holds for $u \in \intervalleff{0}{x_n}$.
\end{lm}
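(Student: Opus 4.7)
My approach would be a sub-Gaussian-type exponential Chebyshev estimate applied to the truncated summands $Y_i \defeq X_i \indic_{X_i < x_n^\epsilon}$. Since $S_m = Y_1 + \dots + Y_m$ on the truncation event, Markov's inequality gives, for any $\lambda \geqslant 0$,
\[
\Prob(S_m \geqslant u,\ \forall i \in \intervallentff{1}{m} \quad X_i < x_n^\epsilon)
\leqslant e^{-\lambda u}\, \E[e^{\lambda X} \indic_{X < x_n^\epsilon}]^m .
\]
I would then pick $\lambda \defeq u/(n \sigma^2 (1+\delta'))$, where $\delta' > 0$ is chosen small enough that $1/(1+\delta') \geqslant 1-\delta$. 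The constraint $u \leqslant M n x_n^{-\epsilon}$ is precisely what forces $\lambda x_n^\epsilon \leqslant K \defeq M/(\sigma^2 (1+\delta'))$, so that on the truncation event the exponent $\lambda X$ stays bounded above by a universal constant, which will allow Taylor-type control.

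\medskip

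The key step will be to show that, uniformly in $(m,u)$ in the admissible range,
\[
\E[e^{\lambda X} \indic_{X < x_n^\epsilon}] \leqslant \exp\left( \frac{\lambda^2 \sigma^2 (1+\delta')}{2} \right)
\]
for all $n$ large enough. The argument mirrors the one in the proof of Lemma \ref{lem:eq-gauss-Pin0}: using $|e^s - 1 - s - s^2/2| \leqslant c|s|^{2+\gamma}$ on the half-line $\intervalleoo{-\infty}{K}$ (after replacing $\gamma$ by $\gamma \wedge 1$) together with $\E[X] = 0$, $\var(X) = \sigma^2$, and $\rho < \infty$, the expansion yields $1 + \lambda^2 \sigma^2/2 + o(\lambda^2)$, where the tail corrections $\E[X \indic_{X \geqslant x_n^\epsilon}]$ and $\E[X^2 \indic_{X \geqslant x_n^\epsilon}]$ vanish by Hölder and \eqref{hyp1} exactly as in \eqref{eq:gaussian_3}, and the cubic remainder $c \rho \lambda^{2+\gamma} = c\rho \lambda^2 \cdot \lambda^\gamma$ is $o(\lambda^2)$ because $\lambda \leqslant K x_n^{-\epsilon} \to 0$.

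\medskip

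Putting the two bounds together with $m \leqslant n$ and the optimal value of $\lambda$,
\[
\log \Prob(S_m \geqslant u,\ \forall i \in \intervallentff{1}{m} \quad X_i < x_n^\epsilon)
\leqslant -\lambda u + \frac{n \lambda^2 \sigma^2 (1+\delta')}{2}
= -\frac{u^2}{2n \sigma^2 (1+\delta')}
\leqslant -\frac{(1-\delta) u^2}{2n\sigma^2} ,
\]
which is the desired bound. The last assertion then follows by taking $M = C^{1+\epsilon}$, since $x_n \leqslant C n^{1/(1+\epsilon)}$ is equivalent to $x_n \leqslant M n x_n^{-\epsilon}$, so that $\intervalleff{0}{x_n} \subset \intervalleff{0}{M n x_n^{-\epsilon}}$. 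The only delicate point, and where care will be needed, is the \emph{uniformity} of the threshold $n(\delta, M)$ across $m$ and $u$: this is ultimately painless here because the error terms depend on the pair $(m,u)$ only through $\lambda$, and $\lambda$ is itself controlled uniformly by the single scale $K x_n^{-\epsilon}$ imposed by the constraint on $u$.
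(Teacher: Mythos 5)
Your proposal is correct and follows essentially the same exponential Chebyshev route as the paper: truncate, bound by $e^{-\lambda u}\,\E[e^{\lambda X}\indic_{X<x_n^\epsilon}]^m$, choose $\lambda$ proportional to $u/(n\sigma^2)$ so that $\lambda x_n^\epsilon$ is uniformly bounded by the constraint on $u$, and Taylor-expand the truncated Laplace transform up to a uniform $O(\lambda^{2+\gamma})$ remainder. The only cosmetic differences are that you embed the slack factor $(1+\delta')^{-1}$ in $\lambda$ (the paper puts it in the quadratic term instead) and that you use a two-sided Taylor bound plus a Hölder estimate for the tail, whereas the paper uses a one-sided bound $e^s\leqslant 1+s+s^2/2+c(M)|s|^{2+\gamma}$ whose right-hand side is nonnegative, so the truncation indicator can simply be dropped.
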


\begin{proof}
Using the fact that $\indic_{t \geqslant 0} \leqslant e^t$, for all $\lambda > 0$, 
\begin{align*}
\Prob(S_m \geqslant u,\ \forall i \in \intervallentff{1}{m} \quad X_i < x_n^\epsilon)
 & \leqslant e^{-\lambda u} \Espe[e^{\lambda X} \indic_{X < x_n^\epsilon}]^m .
\end{align*}
Up to changing $\gamma$ in $\gamma \wedge 1$, \eqref{eq:behav_X} is true for some $\gamma \in \intervalleof{0}{1}$ and there exists $c(M) > 0$ such that, for all $s \leqslant M \sigma^{-2}$, we have $e^s \leqslant 1 + s + s^2/2+c(M)|s|^{2+\gamma}$. Hence, for $\lambda = u (n\sigma^2)^{-1} \leqslant M \sigma^{-2} x_n^{-\epsilon}$,
\begin{align*}
\Espe[e^{\lambda X} \indic_{X < x_n^\epsilon}]
 & \leqslant 1 + \frac{\lambda^2 \sigma^2}{2} + c(M) \rho \lambda^{2+\gamma}
 \leqslant 1 + \frac{\lambda^2\sigma^2}{2}(1+\delta),
\end{align*}
as soon as $2 c(M) \rho \sigma^{-2} (M \sigma^{-2} x_n^{-\epsilon})^{\gamma} \leqslant \delta$, i.e.\ for $n \geqslant n(\delta,M)$. Thus, since $m \leqslant n$,
\begin{align*}
\log \Prob(S_m \geqslant u,\ \forall i \in \intervallentff{1}{m} \quad X_i < x_n^\epsilon)
 & \leqslant - \lambda u + \frac{n \lambda^2 \sigma^2}{2} (1+\delta) =  - \frac{(1-\delta) u^2}{2 n \sigma^2} . \qedhere
\end{align*}
\end{proof}

\begin{lm} \label{lem:xi_medium}
Fix a sequence $x_n \to \infty$. For all $\delta \in \intervalleoo{0}{1}$, there exists $n(\delta) \geqslant 1$ such that, for all $n \geqslant n(\delta)$, for all $m \geqslant 2$, for all $u \in \intervalleff{0}{x_n}$,
\[
\log \Prob(S_m \geqslant u,\ \forall i \in \intervallentff{1}{m} \quad X_i  \geqslant  x_n ^{\epsilon}) \leqslant - (1-\delta)\bigl( u^{1-\epsilon} + m (1-2^{-\epsilon}) x_n^{\epsilon(1-\epsilon)} \bigr) .
\]
\end{lm}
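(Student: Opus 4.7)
Since the Laplace transform of $X$ is infinite on $(0, \infty)$, one cannot apply an exponential Chebyshev bound to $S_m$ directly; the plan is to apply it instead to $\sum_i X_i^{1-\epsilon}$, whose truncated moment generating function $\E[e^{\theta X^{1-\epsilon}} \indic_{X \geq x_n^\epsilon}]$ is finite for every $\theta \in (0,1)$. The bridge between the two is a deterministic lower bound showing that on the event $\{S_m \geq u,\ X_i \geq x_n^\epsilon \text{ for all } i\}$ the sum $\sum_i X_i^{1-\epsilon}$ already exceeds the target exponent, so that
\[
\Prob(S_m \geq u,\ X_i \geq x_n^\epsilon\ \forall i) \leq \Prob\Bigl(\textstyle\sum_i X_i^{1-\epsilon} \geq u^{1-\epsilon} + m(1 - 2^{-\epsilon}) x_n^{\epsilon(1-\epsilon)},\ X_i \geq x_n^\epsilon\ \forall i\Bigr),
\]
to which exponential Chebyshev applies.

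The key deterministic inequality I would establish is: for $x_1, \ldots, x_m \geq a \geq 0$ and $\epsilon \in (0,1)$,
\[
\sum_{i=1}^m x_i^{1-\epsilon} \geq \left(\sum_{i=1}^m x_i\right)^{1-\epsilon} + (m - m^{1-\epsilon}) a^{1-\epsilon}.
\]
Setting $y_i = x_i - a \geq 0$, the difference between the two sides, viewed as a function $h(y_1, \ldots, y_m)$, vanishes at the origin and has partial derivatives $(1-\epsilon)\bigl[(a + y_j)^{-\epsilon} - (ma + \sum_i y_i)^{-\epsilon}\bigr] \geq 0$, so $h \geq 0$. Applied with $a = x_n^\epsilon$ and using $\sum X_i \geq u$, this yields $\sum X_i^{1-\epsilon} \geq u^{1-\epsilon} + (m - m^{1-\epsilon}) x_n^{\epsilon(1-\epsilon)}$. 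The assumption $m \geq 2$ forces $m^{-\epsilon} \leq 2^{-\epsilon}$, whence $m - m^{1-\epsilon} = m(1 - m^{-\epsilon}) \geq m(1 - 2^{-\epsilon})$, yielding the claimed lower bound.

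For the probabilistic step, I would pick $\theta \in (0,1)$ close to $1$ and $\eta \in (0, 1-\theta)$ small; by \eqref{eq:behav_X1} one has $\Prob(X \geq x) \leq e^{-(1-\eta) x^{1-\epsilon}}$ for $x \geq x_n^\epsilon$ once $n$ is large enough. Integration by parts combined with the change of variable $y = x^{1-\epsilon}$ bounds
\[
\E\bigl[e^{\theta X^{1-\epsilon}} \indic_{X \geq x_n^\epsilon}\bigr] \leq \frac{1-\eta}{1-\eta-\theta}\, e^{-(1-\eta-\theta) x_n^{\epsilon(1-\epsilon)}},
\]
and since $x_n^{\epsilon(1-\epsilon)} \to \infty$ the constant prefactor is absorbed into the exponent at arbitrarily small cost, uniformly in $m$. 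Combining with Chebyshev and taking, for instance, $\theta = 1 - \delta/2$ and $\eta = \delta/4$, a short computation verifies that the coefficients of $u^{1-\epsilon}$ and $m x_n^{\epsilon(1-\epsilon)}$ in the final bound are, respectively, $\leq -(1-\delta)$ and $\leq -(1-\delta)(1-2^{-\epsilon})$. The main obstacle is identifying the deterministic inequality; once it is in hand, the rest is routine Laplace tail integration and careful bookkeeping of constants.
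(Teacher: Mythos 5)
Your proof is correct, and it takes a genuinely different route from the paper's. The paper first disposes of the case $u < m x_n^\epsilon$, then splits the event according to whether some $X_i$ exceeds $u$; for the remaining bulk term it discretizes a multidimensional tail integral over $[x_n^\epsilon,u]^m$ with integer bins, and identifies the maximum of the resulting integrand via a convexity/extreme-point argument on the simplex $\{\sum u_i = u,\ u_i\geqslant x_n^\epsilon\}$, finally absorbing a polynomial prefactor $(u+2)^m$ into the exponent. You instead observe that, since $X^{1-\epsilon}$ has a light tail, an exponential Chebyshev bound becomes available after replacing each $X_i$ by $X_i^{1-\epsilon}$; your deterministic inequality
\[
\sum_{i=1}^m x_i^{1-\epsilon} \geqslant \Bigl(\sum_{i=1}^m x_i\Bigr)^{1-\epsilon} + (m - m^{1-\epsilon})\, a^{1-\epsilon} , \qquad x_1,\dots,x_m \geqslant a \geqslant 0,
\]
proved by checking that the difference is nondecreasing in each coordinate from zero, captures in one stroke what the paper obtains from its extreme-point computation (the two are essentially the same analytic fact, phrased respectively as a direct bound versus an exact minimum), and the tilted moment bound on $\E\bigl[e^{\theta X^{1-\epsilon}}\indic_{X\geqslant x_n^\epsilon}\bigr]$ replaces the paper's discretization entirely. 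Your probabilistic step is a clean single Chernoff argument rather than a decomposition-plus-integral estimate, and it is indeed uniform in $m$; the price is only the bookkeeping needed to check that for a suitable choice of $\theta < 1-\eta$ the exponential in the moment bound together with the $m\log\frac{1-\eta}{1-\eta-\theta}$ prefactor does not spoil the coefficient of $m x_n^{\epsilon(1-\epsilon)}$, which your choice $\theta = 1-\delta/2$, $\eta = \delta/4$ handles once $x_n^{\epsilon(1-\epsilon)}$ is large enough. Overall your version is shorter and avoids the discrete approximation; when writing it up you should spell out the Chernoff step $\Prob(\sum X_i^{1-\epsilon}\geqslant T,\ X_i\geqslant x_n^\epsilon\ \forall i)\leqslant e^{-\theta T}\,\E\bigl[e^{\theta X^{1-\epsilon}}\indic_{X\geqslant x_n^\epsilon}\bigr]^m$ and the monotone-calibration calculation explicitly, since these are the only places where uniformity in $m$ and the size of $n(\delta)$ are actually decided.
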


\begin{proof}
The result is trivial for $u < m x_n^\epsilon$. In the sequel, we suppose $ u  \geqslant  m x_n ^{\epsilon}$. Let $q' = 1-2\delta/3$ and $q'' = 1-\delta/3$, so that $1-\delta < q' < q'' < 1$. Choose $x(\delta) > 0$ such that, for all $x \geqslant x(\delta)$, $\log \Prob(X \geqslant x) \leqslant -q'' x^{1-\epsilon}$. One has:
\begin{align*}
\Prob(S_m \geqslant u,\ \forall i \in \intervallentff{1}{m} \quad X_i \geqslant  x_n^\epsilon)
 & \leqslant \Prob(S_m \geqslant u,\ \forall i \in \intervallentff{1}{m} \quad x_n^\epsilon \leqslant X_i < u) \\
 & \hspace{1cm} + \Prob(\exists i \in \intervallentff{1}{m}\quad X_i \geqslant  u,\ \forall i \in \intervallentff{1}{m} \quad X_i \geqslant x_n^\epsilon) .
\end{align*}
First,
\begin{align} \label{eq:xi_medium_1}
\Prob(\exists i  \in \intervallentff{1}{m} \quad X_i \geqslant u,\ \forall i \in \intervallentff{1}{m} \quad  X_i \geqslant x_n^\epsilon)
 & \leqslant m \Prob(X \geqslant u) \Prob(X \geqslant x_n^\epsilon)^{m-1} \nonumber \\
 & \leqslant m e^{-q'(u^{1-\epsilon} + (m-1) x_n^{\epsilon(1-\epsilon)})}
\end{align}
as soon as $x_n^\epsilon \geqslant x(\delta)$ (remember that $u \geqslant mx_n^\epsilon \geqslant x_n^\epsilon$), i.e.\ as soon as $n \geqslant n_1(\delta)$. Secondly, denoting by $a_i$ integers,
\begin{align*}
\Prob(S_m \geqslant u,\ \forall i \in \intervallentff{1}{m} \quad x_n^\epsilon \leqslant X_i < u)
 & = \int_{\forall i\ x_n^\epsilon \leqslant u_i < u} \indic_{u_1 + \dots + u_m \geqslant u} \prod_{i=1}^m \Prob(X \in du_i) \\
 & \leqslant \sum_{\forall i\ \ceil{x_n^\epsilon} \leqslant a_i \leqslant \ceil{u}} \indic_{a_1 + \dots + a_m \geqslant u} \prod_{i=1}^m \Prob(a_i-1 < X \leqslant a_i) \\
 & \leqslant \sum_{\forall i\ \ceil{x_n^\epsilon} \leqslant a_i \leqslant \ceil{u}} \indic_{a_1 + \dots + a_m \geqslant u} \prod_{i=1}^m e^{-q''(a_i-1)^{1-\epsilon}} \\
 & \leqslant \int_{\forall i\ x_n^\epsilon \leqslant u_i < u+2} \indic_{u_1 + \dots + u_m \geqslant u} \prod_{i=1}^m e^{-q''(u_i-2)^{1-\epsilon}} du_i \\
 & \leqslant \int_{\substack{\forall i\ x_n^\epsilon \leqslant u_i < u+2 \\ u_1 + \dots + u_m \geqslant u}} e^{-q'(u_1^{1-\epsilon} + \dots + u_m^{1-\epsilon})} du_1 \cdots du_m ,
\end{align*}
%
%
%
%
as soon as $n$ is large enough ($n \geqslant n_2(\delta) \geqslant n_1(\delta)$) so that, for all $v \geqslant x_n^\epsilon$, $q''(v-2)^{1-\epsilon} \geqslant q'v^{1-\epsilon}$.
Now, the function $f \colon (u_1,\dots,u_m) \mapsto -q'(u_1^{1-\epsilon} + \dots + u_m^{1-\epsilon})$ is convex, so $f$ reaches its maximum on the domain of integration at the points where all the $u_i$ equal $x_n^\epsilon$, except one equal to $u-(m-1)x_n^\epsilon$. Therefore,
\begin{align*}
\Prob(S_m \geqslant u,\ & \forall i \in \intervallentff{1}{m} \quad  X_i \geqslant x_n^\epsilon)
 \leqslant (u+2)^m \exp\bigl[ -q'\bigl( (u-(m-1)x_n^\epsilon)^{1-\epsilon} + (m-1)x_n^{\epsilon(1-\epsilon)} \bigr) \bigr] .
\end{align*}
Let
\[
f(m,u) = (u-(m-1)x_n^\epsilon)^{1-\epsilon} + (m-1)x_n^{\epsilon(1-\epsilon)} - u^{1-\epsilon} .
\]
Since
\[
\frac{\partial f}{\partial u}(m,u) = (1-\epsilon) \biggl( \frac{1}{(u-(m-1)x_n^\epsilon)^\epsilon} - \frac{1}{u^\epsilon} \biggr) > 0
\]
and
\[
f(m,m x_n^\epsilon) = x_n^{\epsilon(1-\epsilon)} m(1-m^{-\epsilon}) \geqslant x_n^{\epsilon(1-\epsilon)} m(1-2^{-\epsilon}) ,
\]
we get
\begin{align} \label{eq:xi_medium_2}
\Prob(S_m \geqslant u,\ \forall i \in \intervallentff{1}{m} \quad x_n^\epsilon \leqslant X_i < u)
 & \leqslant (u+2)^m \exp\bigl[ -q'\bigl( u^{1-\epsilon} + m(1-2^{-\epsilon}) x_n^{\epsilon(1-\epsilon)} \bigr) \bigr] .
\end{align}
Finally, putting together \eqref{eq:xi_medium_1} and \eqref{eq:xi_medium_2}, and using the fact that, for $m \geqslant 2$, $m-1 \geqslant m(1-2^{-\epsilon})$ and $(u+2)^m+m \leqslant (u+3)^m$,
\begin{align*}
\Prob(S_m \geqslant u,\ \forall i \in \intervallentff{1}{m} \quad X_i \geqslant  x_n^\epsilon)
 & \leqslant (u+3)^m \exp\bigl[ -q'\bigl( u^{1-\epsilon} + m(1-2^{-\epsilon}) x_n^{\epsilon(1-\epsilon)} \bigr) \bigr] \\
 & \leqslant \exp\bigl[ -(1-\delta)\bigl( u^{1-\epsilon} + m(1-2^{-\epsilon}) x_n^{\epsilon(1-\epsilon)} \bigr) \bigr]
\end{align*}
as soon as
\[
\log(u+3) \leqslant \log(x_n+3) \leqslant \frac{\delta}{3} (1-2^{-\epsilon}) x_n^{\epsilon(1-\epsilon)} ,
\]
i.e.\ for $n \geqslant n(\delta) \geqslant n_2(\delta)$.
\end{proof}

\subsection{Upper bound for the sum of the $\Pi_{n,m}$}

Using the uniform bounds of Lemmas \ref{lem:xi_small} and \ref{lem:xi_medium}, we are able to bound the remaining term $\sum_{m=2}^n \binom{n}{m} \Pi_{n,m}(x_n)$ with an argument mimicing the proof of the upper bound in our unilateral sum-contraction principle.

\begin{lm} \label{lem:maj_pinm}
If $n^{1/2} \ll x_n \ll n^{1/(1+\epsilon)}$, then
\[
\limsup_{n \to \infty} \frac{n}{x_n^2} \log \sum_{m=2}^n \binom{n}{m} \Pi_{n,m}(x_n)
 \leqslant - \frac{1}{2 \sigma^2} .
\]
If $x_n = C n^{1/(1+\epsilon)}$, then
\[
\limsup_{n \to \infty} \frac{n}{x_n^2} \log \sum_{m=2}^n \binom{n}{m} \Pi_{n,m}(x_n)
 \leqslant - J(C) .
\]
\end{lm}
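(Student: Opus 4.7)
The plan is to decompose $\Pi_{n,m}(x_n)$ using the independence of the sums $T_m \defeq X_1 + \dots + X_m$ (conditioned on all $X_i \geqslant x_n^\epsilon$ for $i \leqslant m$) and $U_{n-m} \defeq X_{m+1} + \dots + X_n$ (conditioned on all $X_j < x_n^\epsilon$ for $j > m$), combined with a discretization argument in the spirit of the upper bound in Proposition \ref{principe-contraction-couple} and the uniform bounds of Lemmas \ref{lem:xi_medium} and \ref{lem:xi_small}. Fix $\delta \in \intervalleoo{0}{1}$ and a mesh $\eta \in \intervalleoo{0}{1}$, with $N \defeq \lceil 1/\eta \rceil$ and $u_k \defeq k\eta x_n$. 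The inclusion
\[
\{ T_m + U_{n-m} \geqslant x_n \} \subset \{ T_m \geqslant x_n \} \cup \bigcup_{k=0}^{N-1} \bigl( \{ T_m \geqslant u_k \} \cap \{ U_{n-m} \geqslant x_n - u_{k+1} \} \bigr)
\]
together with independence and the two lemmas (the latter applied with $n-m \leqslant n$) yields, for $n$ large enough,
\[
\Pi_{n,m}(x_n) \leqslant e^{-(1-\delta)(x_n^{1-\epsilon} + m \kappa_n)} + \sum_{k=0}^{N-1} e^{-(1-\delta)(u_k^{1-\epsilon} + m \kappa_n + (x_n - u_{k+1})^2/(2n\sigma^2))} ,
\]
where $\kappa_n \defeq (1-2^{-\epsilon}) x_n^{\epsilon(1-\epsilon)}$.

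The next step is to multiply by $\binom{n}{m} \leqslant e^{m \log n}$ and sum over $m \geqslant 2$. The $m$-dependent exponent $m(\log n - (1-\delta)\kappa_n)$ is bounded above by $-m\alpha_n$ with $\alpha_n \to \infty$, since $\log n = o(x_n^{\epsilon(1-\epsilon)})$ (using $x_n \gg n^{1/2}$ and $\epsilon(1-\epsilon) > 0$). Hence $\sum_{m \geqslant 2} e^{-m\alpha_n} \leqslant 2 e^{-2\alpha_n}$ is a bounded prefactor, contributing nothing at the $(n/x_n^2)\log$ scale. What remains is an $m$-independent sum of $N+1$ exponential terms, to which Lemma \ref{lm-magique} applies and gives
\[
\limsup \frac{n}{x_n^2} \log \sum_{m=2}^n \binom{n}{m} \Pi_{n,m}(x_n) \leqslant -(1-\delta) \min\biggl\{ \frac{n}{x_n^{1+\epsilon}},\ \min_{0 \leqslant k < N} \biggl[ \frac{n u_k^{1-\epsilon}}{x_n^2} + \frac{(x_n - u_{k+1})^2}{2 x_n^2 \sigma^2} \biggr] \biggr\} .
\]

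For the transition $x_n = Cn^{1/(1+\epsilon)}$, this minimum becomes $\min\bigl\{ 1/C^{1+\epsilon},\ \min_k \bigl[ (k\eta)^{1-\epsilon}/C^{1+\epsilon} + (1-(k+1)\eta)^2/(2\sigma^2) \bigr] \bigr\}$. Letting $\eta \to 0$, the inner min converges to $\inf_{s \in [0,1]} [s^{1-\epsilon}/C^{1+\epsilon} + (1-s)^2/(2\sigma^2)] = J(C)$ (via the substitution $s = 1-t$), while the boundary value $1/C^{1+\epsilon}$ equals the $s=1$ value, hence is at least $J(C)$. Sending $\delta \to 0$ afterwards yields $\limsup \leqslant -J(C)$. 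In the Gaussian range $x_n \ll n^{1/(1+\epsilon)}$, one has $n/x_n^{1+\epsilon} \to \infty$ and $n u_k^{1-\epsilon}/x_n^2 = (k\eta)^{1-\epsilon} n/x_n^{1+\epsilon} \to \infty$ for $k \geqslant 1$, so only the $k=0$ term survives with value $(1-\eta)^2/(2\sigma^2) \to 1/(2\sigma^2)$.

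The main obstacle is controlling the sum over $m$: it relies crucially on the uniformity in $m$ of Lemma \ref{lem:xi_medium} and on the inequality $\log n \ll x_n^{\epsilon(1-\epsilon)}$, without which the binomial factor $\binom{n}{m}$ would dominate and spoil the geometric decay in $m$.
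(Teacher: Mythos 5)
Your proof follows essentially the same route as the paper's: discretize the constraint $a+b\geqslant 1$ into a finite cover, feed each piece into the uniform bounds of Lemmas \ref{lem:xi_small} and \ref{lem:xi_medium}, kill the binomial factor using $\log n = o(x_n^{\epsilon(1-\epsilon)})$ (the paper absorbs $\binom{n}{m}$ into the factored-out sum $\sum_m \binom{n}{m}e^{-(1-\delta)(1-2^{-\epsilon})mx_n^{\epsilon(1-\epsilon)}}$, which is bounded), apply the principle of the largest term, and pass to the limit in the mesh and in $\delta$. The paper's cover is $\{x\geqslant (k-1)/r,\ y\geqslant 1-k/r\}_{1\leqslant k\leqslant r}$, which has the small aesthetic advantage that both thresholds lie in $[0,1]$ for every $k$.

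One small slip in your version: for $k=N-1$ you have $u_N = N\eta x_n \geqslant x_n$, so $x_n - u_{k+1}\leqslant 0$ and Lemma \ref{lem:xi_small} does not apply; the probability $\Prob(U_{n-m}\geqslant x_n-u_N)$ is close to $1$, not $\leqslant e^{-(1-\delta)(x_n-u_N)^2/(2n\sigma^2)}$. You should replace $(x_n-u_{k+1})^2$ by $(x_n-u_{k+1})_+^2$ (or simply drop the Gaussian factor for $k=N-1$). This does not affect the conclusion, since that term contributes the boundary value $1/C^{1+\epsilon}\geqslant J(C)$ in the transition case (and diverges in the Gaussian range), but as written the displayed bound on $\Pi_{n,m}(x_n)$ is not valid.
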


\begin{proof}
Suppose $n^{1/2} \ll x_n \leqslant C n^{1/(1+\epsilon)}$. Fix some integer $r \geqslant 1$. Noticing that
\[
\enstq{(x,y) \in (\R_+)^2}{x + y \geqslant 1} \subset \bigcup_{k=1}^r \enstq{(x,y) \in (\R_+)^2}{x \geqslant \frac{k-1}{r},\ y \geqslant 1-\frac{k}{r}} ,
\]
we have, for all $m \in \intervallentff{2}{n}$,
\begin{align*}
\Pi_{n,m}(x_n)
 & = \Prob(S_n \geqslant x_n , \ \forall i \in \intervallentff{1}{m} \quad X_i \geqslant x_n^\epsilon , \ \forall i \in \intervallentff{m+1}{n} \quad X_i < x_n^\epsilon) \\
 & \leqslant \sum_{k=1}^r \Prob\Bigl( S_{n-m} \geqslant \frac{k-1}{r} x_n , \ \forall i \in \intervallentff{1}{n-m} \quad X_i < x_n^\epsilon \Bigr) \\
 & \hspace{3cm} \Prob\Bigl( S_m \geqslant \Bigl( 1-\frac{k}{r} \Bigr) x_n , \ \forall i \in \intervallentff{1}{m} \quad X_i \geqslant x_n^\epsilon \Bigr) \\
 & \leqslant \sum_{k=1}^r \exp\biggl[ -(1-\delta)\biggl( \frac{((k-1)/r)^2 x_n^2}{2 n \sigma^2} + \Bigl( 1 - \frac{k}{r} \Bigr)^{1-\epsilon} x_n^{1-\epsilon} + m(1-2^{-\epsilon}) x_n^{\epsilon(1-\epsilon)} \biggr) \biggr] ,
\end{align*}
for $n$ large enough, applying Lemmas \ref{lem:xi_small} and \ref{lem:xi_medium}. Hence,
\begin{align*}
\log \sum_{m=2}^n \binom{n}{m} \Pi_{n,m}(x_n)
 & \leqslant \log \sum_{k=1}^r \exp\biggl[ -(1-\delta)\biggl( \frac{((k-1)/r)^2 x_n^2}{2 n \sigma^2} + \Bigl( 1 - \frac{k}{r} \Bigr)^{1-\epsilon} x_n^{1-\epsilon} \biggr) \biggr] \\
 & \hspace{3cm} + \log \sum_{m=2}^n \binom{n}{m} e^{m(1-2^{-\epsilon}) x_n^{\epsilon(1-\epsilon)}}
\end{align*}
where the latter sum is bounded.

\begin{itemize}
\item If $x_n \ll n^{1/(1+\epsilon)}$, then $x_n^2/n \ll x_n^{1-\epsilon}$ and, applying the principle of the largest term (Lemma \ref{lm-magique}), we get
\[
\limsup_{n \to \infty} \frac{n}{x_n^2} \log \sum_{m=2}^n \binom{n}{m} \Pi_{n,m}(x_n)
 \leqslant - (1-\delta) \Bigl( \frac{r-1}{r} \Bigr)^2 \frac{1}{2 \sigma^2} ,
\]
so, letting $r \to \infty$ and $\delta \to 0$,
\[
\limsup_{n \to \infty} \frac{n}{x_n^2} \log \sum_{m=2}^n \binom{n}{m} \Pi_{n,m}(x_n)
 \leqslant - \frac{1}{2 \sigma^2} .
\]
\item If $x_n = C n^{1/(1+\epsilon)}$, then $x_n^2/n = C^2 n^{(1-\epsilon)/(1+\epsilon)} = C^{1+\epsilon} x_n^{1-\epsilon}$ and, applying the principle of the largest term (Lemma \ref{lm-magique}), we get
\[
\limsup_{n \to \infty} \frac{n}{x_n^2} \log \sum_{m=2}^n \binom{n}{m} \Pi_{n,m}(x_n)
 \leqslant - (1-\delta) \min_{k=1}^r \biggl( \frac{((k-1)/r)^2 }{2 \sigma^2} + \frac{1}{C^{1+\epsilon}}\Bigl( 1-\frac{k}{r} \Bigr)^{1-\epsilon} \biggr) ,
\]
so, letting $r \to \infty$ and $\delta \to 0$,
\[
\limsup_{n \to \infty} \frac{n}{x_n^2} \log \sum_{m=2}^n \binom{n}{m} \Pi_{n,m}(x_n)
 \leqslant - \min_{t \in \intervalleff{0}{1}} \biggl( \frac{t^2}{2 \sigma^2} + \frac{(1-t)^{1-\epsilon}}{C^{1+\epsilon}} \biggr)
 = - J(C) . \qedhere
\]
\end{itemize}
\end{proof}

\section{Proof of Theorem \ref{tm-max}} \label{sec:proof_2}
 
To be complete, we mention a short proof of Theorem \ref{tm-max} that we did not find in the literature. Recall that we may assume that $q=1$ without loss of generality (see the beginning of Section \ref{sec:proof_13}). First,
\[
\p(M_n \geqslant x_n) = 1 - (1 - \Prob(X \geqslant x_n))^n \sim n \Prob(X \geqslant x_n) ,
\]
so $x_n^{-1+\varepsilon} \log \Prob(M_n \geqslant x_n) \to -1$ by \eqref{eq:behav_X1}. As for $S_n$, we introduce the following decomposition 
\[
\p(S_n \geqslant x_n) = P_n + R_n
\]
where
\[
P_n \defeq \p(S_n \geqslant x_n,\ \forall i \in \intervallentff{1}{n} \quad X_i < x_n) \quad \text{and} \quad 
R_n \defeq \Prob(S_n \geqslant x_n ,\ \exists i \in \intervallentff{1}{n} \quad X_i \geqslant x_n).
\]

Theorem \ref{tm-max} is a direct consequence of Lemmas \ref{lm-magique}, \ref{lem:limsupRn0}, and \ref{lem:limsupPn0}.

\begin{lm} \label{lem:limsupRn0}
If $x_n \gg n^{1/(1+\epsilon)}$, then
\begin{equation} \label{limsupRn0}
\lim \frac{1}{x_n ^{1-\epsilon}} \log R_n = -1 .
\end{equation}
\end{lm}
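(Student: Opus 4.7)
\textbf{Proof plan for Lemma \ref{lem:limsupRn0}.} The idea is to bound $R_n$ from above and below by quantities comparable to $n\Prob(X\geqslant x_n)$ and then use \eqref{eq:behav_X1} together with the observation that $x_n \gg n^{1/(1+\epsilon)}$ implies $\log n = o(x_n^{1-\epsilon})$. Indeed, $x_n^{1-\epsilon} \gg n^{(1-\epsilon)/(1+\epsilon)}$ and the exponent $(1-\epsilon)/(1+\epsilon)$ is strictly positive, so $x_n^{1-\epsilon}/\log n \to \infty$.

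For the upper bound, I would simply use the union bound
\[
R_n \leqslant \Prob(\exists i \in \intervallentff{1}{n}\ X_i \geqslant x_n) \leqslant n \Prob(X \geqslant x_n) .
\]
Taking logarithms and dividing by $x_n^{1-\epsilon}$, one obtains
\[
\varlimsup \frac{1}{x_n^{1-\epsilon}} \log R_n \leqslant \varlimsup \frac{\log n}{x_n^{1-\epsilon}} + \varlimsup \frac{\log \Prob(X \geqslant x_n)}{x_n^{1-\epsilon}} = 0 + (-1) = -1 ,
\]
using \eqref{eq:behav_X1} and the estimate $\log n = o(x_n^{1-\epsilon})$ recalled above.

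For the lower bound, I would restrict the event to the case where $X_1$ is the ``big'' summand:
\[
R_n \geqslant \Prob(X_1 \geqslant x_n,\ X_2 + \dots + X_n \geqslant 0) = \Prob(X \geqslant x_n) \cdot \Prob(X_2 + \dots + X_n \geqslant 0) ,
\]
by independence. Since $X$ is centered with finite variance $\sigma^2$, the classical central limit theorem gives $\Prob(X_2 + \dots + X_n \geqslant 0) \to 1/2$; in particular this probability is bounded away from $0$ for $n$ large. Taking logarithms and dividing by $x_n^{1-\epsilon}$ yields
\[
\varliminf \frac{1}{x_n^{1-\epsilon}} \log R_n \geqslant \varliminf \frac{\log \Prob(X \geqslant x_n)}{x_n^{1-\epsilon}} + \varliminf \frac{\log \Prob(X_2 + \dots + X_n \geqslant 0)}{x_n^{1-\epsilon}} = -1 + 0 ,
\]
since the second summand is $O(1)/x_n^{1-\epsilon} \to 0$. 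Combining both bounds yields \eqref{limsupRn0}. No real obstacle is expected here: the delicate point is simply checking that $\log n = o(x_n^{1-\epsilon})$ in the regime $x_n \gg n^{1/(1+\epsilon)}$, which is immediate from the positivity of the exponent $(1-\epsilon)/(1+\epsilon)$.
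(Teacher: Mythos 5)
Your proof is correct and follows essentially the same route as the paper: the upper bound is the union bound $R_n \leqslant n\Prob(X \geqslant x_n)$, and the lower bound is $R_n \geqslant \Prob(S_{n-1} \geqslant 0)\Prob(X \geqslant x_n)$ with the central limit theorem giving $\Prob(S_{n-1} \geqslant 0) \to 1/2$. The only difference is that you spell out explicitly why $\log n = o(x_n^{1-\epsilon})$, which the paper leaves implicit.
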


\begin{proof}
Notice that
\[
\p(S_{n-1} \geqslant 0) \p(X \geqslant x_n) \leqslant R_n \leqslant n \p(X \geqslant x_n) .
\]
The central limit theorem provides $\Prob(S_{n-1} \geqslant 0) \to 1/2$ and the result follows.
\end{proof}

\begin{lm} \label{lem:limsupPn0}
If $x_n \gg n^{1/(1+\epsilon)}$, then
\begin{equation}
\varlimsup \frac{1}{x_n ^{1-\epsilon}} \log P_n \leqslant -1 . \label{limsupPn0}
\end{equation}
\end{lm}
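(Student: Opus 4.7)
The plan is to use the exponential Chebyshev inequality with a well-chosen $\lambda_n = \alpha x_n^{-\epsilon}$, $\alpha \in (0,1)$ arbitrary, and estimate the Laplace transform of $X$ restricted to $\{X < x_n\}$. Even though $\Lambda_X$ blows up at the right of zero, the truncation at $x_n$ renders the restricted Laplace transform finite, and the scaling $\lambda_n x_n = \alpha x_n^{1-\epsilon}$ is exactly what produces the desired rate. The starting point is
\[
P_n \leqslant e^{-\lambda_n x_n} \Espe\bigl[ e^{\lambda_n X} \indic_{X < x_n} \bigr]^n .
\]

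To estimate the restricted Laplace transform, I would split at the intermediate level $x_n^\epsilon$:
\[
\Espe\bigl[ e^{\lambda_n X} \indic_{X < x_n} \bigr]
 = \underbrace{\Espe\bigl[ e^{\lambda_n X} \indic_{X < x_n^\epsilon} \bigr]}_{A_n}
  + \underbrace{\Espe\bigl[ e^{\lambda_n X} \indic_{x_n^\epsilon \leqslant X < x_n} \bigr]}_{B_n}.
\]
For $A_n$, on $\{X < x_n^\epsilon\}$ one has $\lambda_n X < \alpha$, so the elementary inequality $e^u \leqslant 1 + u + K_\alpha u^2$ for $u \leqslant \alpha$ (with $K_\alpha = (e^\alpha-1-\alpha)/\alpha^2$) combined with $\Espe[X]=0$ and $\Espe[X\indic_{X\geqslant x_n^\epsilon}] \geqslant 0$ gives $A_n \leqslant 1 + K_\alpha \lambda_n^2 \sigma^2$. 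For $B_n$, an integration by parts yields
\[
B_n \leqslant e^{\alpha} \Prob(X \geqslant x_n^\epsilon) + \lambda_n \int_{x_n^\epsilon}^{x_n} e^{\lambda_n u - (1-\delta) u^{1-\epsilon}} du
\]
for any $\delta \in (0,1)$ and $n$ large, using the Weibull-like tail \eqref{eq:behav_X1}. On $[x_n^\epsilon, x_n]$ the exponent $g(u) = \lambda_n u - (1-\delta) u^{1-\epsilon}$ is strictly convex in $u^{1-\epsilon}$-coordinates, hence attains its maximum at the endpoints; picking $\delta < 1-\alpha$ makes both $g(x_n^\epsilon) \sim -(1-\delta) x_n^{\epsilon(1-\epsilon)}$ and $g(x_n) \leqslant -(1-\alpha-\delta) x_n^{1-\epsilon}$ tend to $-\infty$. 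Thus $B_n$ decays faster than any power of $x_n^{-1}$ and in particular $B_n = o(\lambda_n^2)$.

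Combining the two estimates, $\Espe[e^{\lambda_n X} \indic_{X < x_n}] \leqslant 1 + K_\alpha \lambda_n^2 \sigma^2 (1+o(1))$, so taking logarithms,
\[
\log P_n \leqslant -\lambda_n x_n + n K_\alpha \lambda_n^2 \sigma^2 (1+o(1))
 = -\alpha x_n^{1-\epsilon} + O\bigl( n x_n^{-2\epsilon} \bigr) .
\]
The hypothesis $x_n \gg n^{1/(1+\epsilon)}$ is equivalent to $n \ll x_n^{1+\epsilon}$, i.e.\ $n x_n^{-2\epsilon} = o(x_n^{1-\epsilon})$, so the error term is swallowed by the leading $-\alpha x_n^{1-\epsilon}$. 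Dividing by $x_n^{1-\epsilon}$ and taking $\varlimsup$ yields $\varlimsup x_n^{\epsilon-1} \log P_n \leqslant -\alpha$ for every $\alpha \in (0,1)$; letting $\alpha \to 1^-$ gives the claim.

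The main difficulty is the estimate of $B_n$: one has to handle a regime where $\lambda_n u$ can be as large as $\alpha x_n^{1-\epsilon} \to \infty$, so the standard polynomial bound on $e^{\lambda_n u}$ is unavailable. The whole cancellation takes place between $\lambda_n u$ and the Weibull tail $u^{1-\epsilon}$, and extracting the condition $\alpha < 1$ (which is exactly what allows us to recover the rate $-1$) requires the integration-by-parts step and a careful comparison of the two endpoint contributions in the exponent $g(u)$.
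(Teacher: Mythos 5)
Your proof is correct and follows essentially the same route as the paper: exponential Chebyshev with tilt $\lambda_n = \alpha x_n^{-\epsilon}$, $\alpha < 1$, a split of the truncated Laplace transform at $x_n^\epsilon$, a quadratic Taylor bound on the bulk contribution, integration by parts plus convexity of the exponent on the intermediate slab $[x_n^\epsilon, x_n]$, and finally $\alpha \to 1^-$. The only (cosmetic) divergence is that you exploit $\E[X]=0$ and the sign of $\E[X\indic_{X\geqslant x_n^\epsilon}]$ to keep the truncation indicator and get the clean bound $A_n \leqslant 1 + K_\alpha\lambda_n^2\sigma^2$, where the paper simply drops the indicator.
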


\begin{proof}
Using the fact that $\indic_{x \geqslant 0} \leqslant e^x$, for all $q' \in \intervalleoo{0}{1}$,
\begin{align}
P_n
 & \leqslant e^{-q' x_n^{1-\epsilon}} \E \bigl[e^{q' x_n^{-\epsilon}X} \indic_{X < x_n}\bigr]^n
 = e^{-q' x_n^{1-\epsilon}} \bigl( \E \bigl[e^{q' x_n^{-\epsilon}X} \indic_{X < x_n^\epsilon}\bigr] + \E \bigl[e^{q' x_n^{-\epsilon}X} \indic_{x_n^\epsilon \leqslant X < x_n}\bigr] \bigr)^n . \label{eq:maxjump_Pn0a}
\end{align}

First, there exists $c > 0$ such that, for all $t \leqslant u$, $e^t \leqslant 1 + t + c t^2$. Therefore,
\begin{align*}
\E \left[e^{q' x_n^{-\epsilon}X} \indic_{X < x_n^\epsilon} \right]
 & \leqslant \E \left[1 + q' x_n^{-\epsilon}X + c (q' x_n^{-\epsilon}X)^2 \right] = 1 + O(x_n ^{-2\epsilon}) .
\end{align*}

Second, integrating by parts,
\begin{align*}
\E \left[e^{q' x_n^{-\epsilon}X} \indic_{x_n^\epsilon \leqslant X < x_n}\right]
 = \int_{x_n^{\epsilon}}^{x_n} e^{q' x_n^{-\epsilon} y} \p(X \in dy)
 = - \bigl[ e^{q' x_n^{-\epsilon} y} \p(X \geqslant y) \bigr]_{x_n^\epsilon}^{x_n} + q' x_n^{-\epsilon} \int_{x_n^{\epsilon}}^{x_n} e^{q' x_n^{-\epsilon} y} \p(X \geqslant y) dy .
\end{align*}
Let $q'' \in \intervalleoo{q'}{1}$. Using \eqref{eq:behav_X1}, for $n$ large enough, we deduce that
\begin{align*}
\E \left[e^{q' x_n^{-\epsilon}X} \indic_{x_n^\epsilon \leqslant X < x_n}\right]
 \leqslant e^{q'} \p(X \geqslant x_n^\epsilon) + q' x_n^{-\epsilon} \int_{x_n^{\epsilon}}^{x_n} e^{q' x_n^{-\epsilon} y - q'' y^{1-\epsilon}} dy .
\end{align*}
The convex function $f_n(y)=q' x_n ^{-\epsilon}y - q''y^{1-\epsilon}$ attains its maximum on $\intervalleff{x_n^\epsilon}{x_n}$ on the boundary. Since $f_n(x_n^{\epsilon}) = q' - q'' x_n^{\epsilon(1-\epsilon)}$, $f_n(x_n)=(q'-1) x_n^{1-\epsilon}$, and $q' \in \intervalleoo{0}{1}$, $f_n(x_n) \leqslant f_n(x_n^{\epsilon})$ for $n$ large enough, whence
\[
\E \left[ e^{q' x_n ^{-\epsilon}X} \indic_{x_n^\epsilon \leqslant X < x_n } \right] \leqslant (1+q' x_n^{1-\epsilon}) e^{q' - q'' x_n^{\epsilon(1-\epsilon)}} = O(x_n ^{-2\epsilon}) .
\]
Consequently, for $x_n \gg n^{1/(1+\epsilon)}$, 
\begin{align*}
\frac{1}{x_n^{1-\epsilon}} \log P_n
 & \leqslant -q' + \frac{n}{x_n^{1-\epsilon}} \log(1 + O(x_n ^{-2\epsilon}))
 = -q'+O\left( \frac{n}{x_n ^{1+\epsilon}} \right),
\end{align*}
and the conclusion follows letting $q'\to 1$.  
\end{proof}

\bibliographystyle{abbrv}
\bibliography{biblio_PGD}

\end{document}